\theoremstyle{plain}
\newtheorem*{theorem*}{Theorem}
\newtheorem*{lemma*} {Lemma}
\newtheorem*{corollary*} {Corollary}
\newtheorem*{proposition*}{Proposition}
\newtheorem*{conjecture*}{Conjecture}
\newtheorem{theorem}{Theorem}[section]
\newtheorem{lemma}[theorem]{Lemma}
\newtheorem*{theorem1*}{Theorem 1}
\newtheorem*{theorem2*}{Theorem 2}
\newtheorem*{theorem3*}{Theorem 3}
\newtheorem{corollary}[theorem]{Corollary}
\newtheorem{proposition}[theorem]{Proposition}
\newtheorem{conjecture}[theorem]{Conjecture}
\newtheorem{question}[theorem]{Question}
\theoremstyle{remark}
\newtheorem*{remark}{Remark}
\newtheorem*{definition}{Definition}
\newtheorem{example*}{Example}
\theoremstyle{definition}
\def\gl{\mbox{GL}} \def\Q{\Bbb{Q}} \def\F{\Bbb{F}} \def\Z{\Bbb{Z}} \def\R{\Bbb{R}} \def\C{\Bbb{C}}
\def\N{\Bbb{N}}  \def\l{\lambda}  
 \def\a{\alpha} \def\g{\gamma}  \def\bp{\begin{pmatrix}}
\def\sm{\setminus} \def\ep{\end{pmatrix}} \def\bn{\begin{enumerate}} 
  \def\div{\mbox{div}} \def\en{\end{enumerate}}
\def\ba{\begin{array}} \def\ea{\end{array}}  
 \def\S{\Sigma} \def\s{\sigma} \def\a{\alpha} \def\b{\beta} \def\ti{\tilde}
  \def\im{\mbox{Im}} \def\sign{\mbox{sign}}
\def\be{\begin{equation}} \def\ee{\end{equation}} 
 \def\hom{\mbox{Hom}}  
   \def\k{\kilon}
 \def\dim{\mbox{dim}}
\def\zt{\Z[t^{\pm 1}]}    
\def\w{\omega}   
    \def\fr12{\frac{1}{2}} \def\z12{\Z[\fr12]}
\def\tpm {[t^{\pm 1}]}
\def\xpm {[x^{\pm 1}]}
\def\k{\kappa}
\begin{document}
\title[Twisted Alexander polynomials and $4$--manifolds]{Twisted Alexander polynomials, symplectic $4$--manifolds and  surfaces of minimal complexity}
\author{Stefan Friedl}
%\address{University of Warwick, Coventry, United Kingdom}
%\email{S.K.Friedl@warwick.ac.uk}
\address{\begin{tabular}{l} Universit\'e du Qu\'ebec \`a Montr\'eal, Montr\'eal, Qu\'ebec, and\\ University of Warwick, Coventry, UK\end{tabular}}
\email{sfriedl@gmail.com}
\author{Stefano Vidussi}
\address{Department of Mathematics, University of California,
Riverside, CA 92521, USA} \email{svidussi@math.ucr.edu}
\thanks{S. Friedl was  supported by a CRM--ISM Fellowship and by CIRGET}
\thanks{S. Vidussi was partially supported by a University of California Regents' Faculty Fellowships.}

\date{\today}
\begin{abstract}
Let $M$ be a $4$--manifold which admits a free circle action. We use twisted Alexander polynomials to study the existence of symplectic structures and the minimal complexity of surfaces in $M$. The results on the existence of symplectic structures summarize previous results of the authors in \cite{FV08a,FV08,FV07}.
The results on surfaces of minimal complexity are new.
\end{abstract}
\maketitle

\section{Introduction and main results}

%==========================================================
\subsection{$4$-manifolds with free circle action}
Let $M$ be a $4$--manifold which admits a  free circle action. (Throughout the paper, unless otherwise stated, we will assume that all
manifolds are closed, oriented and connected.)
In this paper we will discuss how twisted Alexander polynomials give information, for this class of manifolds, on two central problems in $4$--dimensional topology, namely the study of the existence of symplectic structures, and the minimal complexity of surfaces in $M$.
%Later in this introduction we will better specify the problems and summarize our results on the subject.

We start by recalling some simple facts about this class of  manifolds, as they will be frequently used in what follows. The existence of a free $S^1$--action on $M$ implies that $M$ is a
principal $S^1$--bundle, so that there is a projection map $p : M \to N$ where we denote by $N$
the orbit space of the free circle action. This principal bundle is determined by its Euler class
$e \in H^2(N;\Z)$. Note that $e=0$ if and only if $M$ is a product bundle $M=S^1\times N$. We have the Gysin sequence
\begin{equation}
\label{gysin} \xymatrix{  H^{0}(N;\Z) \ar[d]^\cong \ar[r]^{\cup \hspace*{1pt} e}& H^2(N;\Z) \ar[d]^\cong \ar[r]^{p^{*}}&
H^2(M;\Z) \ar[d]^\cong \ar[r]^{p_{*}} &H^{1}(N;\Z) \ar[d]^\cong \ar[r]^{\cup \hspace*{1pt} e} &H^3(N;\Z)\ar[d]^\cong \\
H_3(N;\Z)\ar[r]^{\cap e}&H_1(N;\Z)\ar[r]&H_2(M;\Z)\ar[r]^{p_*}&H_2(N;\Z)\ar[r]^{\cap e}&H_0(N;\Z),}
\end{equation}
where $p_* : H^{2}(M;\Z) \to H^{1}(N;\Z)$ denotes integration along the fiber.
From this sequence we can immediately deduce that \be \label{betti} b_2(M)=\left\{ \ba{rl} 2b_1(N),&\mbox{ if $e$ is torsion,}\\ 2b_1(N)-2,&\mbox{ if $e$ is non--torsion.}\ea \right.\ee
One can easily see that the intersection form vanishes on the half--dimensional space  $\im\{H_1(N;\Q)\to H_2(M;\Q)\}\subset H_2(M;\Q)$. It follows that $b_2^+(M) = b_2^-(M)$, so that $\sign(M)=0$.
For sake of simplicity,  we will restrict ourselves in this paper to the case that $e$ is either trivial or non--torsion. Also, we will assume that $b_2^+(M) > 1$, but the techniques and results presented here extend to the torsion case and, with the usual \textit{caveats}, to the case of $b_2^+(M) = 1$. We refer to \cite{FV08a,FV08,FV07} for the details of these cases.
Finally, note that the
long exact homotopy sequence of the fibration shows that the map $p_*:\pi_1(M)\to \pi_1(N)$ is an epimorphism.
%========================================
\subsection{Symplectic $4$--manifolds and fibered $3$--manifolds}
Remember that a 4--manifold is called \textit{symplectic} if it admits a closed,
non--degenerate 2--form $\w$. Our first goal is to study the question of  which 4--manifolds with free circle actions are symplectic. Let $M$  be a $4$--manifold as above. It is well--known that if  $(N,\phi)$ fibers over $S^1$ for some $\phi \in H^1(N,\Z)$ with $\phi\cup e=0$, then $M$ admits a symplectic structure. We refer to \cite{Th76}, \cite{Bou88}, \cite{FGM91} and \cite{FV07} for details. (Here we say that $(N,\phi)$ fibers over $S^{1}$ if the homotopy class of maps
$N \to S^1$ determined by $\phi \in H^{1}(N;\Z) = [N,S^{1}]$ contains a representative that is a fiber bundle
over $S^{1}$.)
It is natural to ask whether the converse of this statement holds true. We point out that the openness of the symplectic condition implies that if $M$ is symplectic then $M$ also has a symplectic form representing an integral cohomology class. We will implicitly make such a choice whenever necessary. Finally, if $\w$ is a symplectic form it follows immediately from $[\w]^2\ne 0 \in H^4(M;\R)$ that $p_*[\w]\ne 0\in H^1(N;\R)$.

The problem of studying the existence of symplectic forms on $M$
can be summarized in terms of the following conjecture:

\begin{conjecture} \label{conjfolk}
Let $M$ be a $4$--manifold with a free circle action with orbit space $N$. If $M$  admits a symplectic structure $\w$ with $[\w]\in H^2(N;\Z)$, then $(N,p_*[\w])$ fibers over $S^1$.
\end{conjecture}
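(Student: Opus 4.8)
The plan is to translate the symplectic hypothesis on $M$ into a fibering statement about $N$ by routing it through gauge theory and the twisted Alexander polynomials of the pair $(N,\phi)$, where I set $\phi := p_*[\w] \in H^1(N;\Z)$. Observe first that $\phi$ is automatically admissible: by exactness of the Gysin sequence (\ref{gysin}) one has $\im(p_*) = \ker(\cup\, e)$, so $\phi \cup e = 0$, while $\phi \ne 0$ was already noted above; this shows the desired statement is a genuine converse to the construction of symplectic forms from fibrations. After dividing by its divisibility I may assume $\phi$ primitive, and the target becomes to verify the fibering of $(N,\phi)$ by checking a criterion phrased entirely in terms of twisted Alexander polynomials.

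The first substantive step is to invoke Taubes' theorem: since $M$ is symplectic with $b_2^+(M) > 1$, the Seiberg--Witten invariant of the canonical Spin$^c$ structure is $\pm 1$, and the basic classes of $M$ obey an adjunction--type bound centred on the canonical class. The second step is to read these gauge--theoretic facts as statements about the Reidemeister--Turaev torsion of $N$: for the product bundle $M = S^1 \times N$ the Meng--Taubes and Turaev identifications give $SW_M \doteq \tau(N)$, and for a non--trivial bundle with $e$ non--torsion there is a corresponding identity into which the Euler class enters (this is where the admissibility $\phi \cup e = 0$ is consumed). To bring in the \emph{twisted} polynomials I would apply this machinery to finite covers: for each epimorphism $\a : \pi_1(N) \to G$ onto a finite group, pull the circle bundle and the symplectic form back to the induced cover $\ti M \to \ti N$. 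A finite cover of a symplectic $4$--manifold is symplectic and still has $b_2^+ > 1$, so Taubes' conclusions apply to $\ti M$, and the resulting Seiberg--Witten data repackage precisely into $\twialex$. Taubes' nonvanishing then forces $\twialex$ to be monic, and the adjunction bound forces its degree $\degtwialex$ to attain the maximal value $|G| \cdot \tnphi$ permitted by the McMullen--type inequality, up to the standard correction by the degrees of $\Delta_0^{\a}(t)$ and $\Delta_2^{\a}(t)$.

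With these constraints in hand for \emph{every} finite quotient $\a$, the final step is a fibering criterion: a primitive class $\phi$ on a $3$--manifold fibers if and only if, for all such $\a$, the twisted Alexander polynomial $\twialex$ is monic and attains the twisted Thurston norm in degree. The forward direction (fibered implies monic of top degree) is classical and elementary; feeding the conclusions of the previous paragraph into the reverse direction yields that $(N,\phi) = (N,p_*[\w])$ fibers over $S^1$, which is the assertion.

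I expect the reverse implication of this fibering criterion to be the central obstacle. Establishing that monicness together with the exact degree equality across all finite covers genuinely certifies fibering is the deep input, resting on the virtual fibering theorem and on tight control of how the degrees of twisted Alexander polynomials behave under passage to finite covers; a top--coefficient and degree argument over a single cover yields only inequalities. A secondary, but still delicate, point is making the Taubes--to--torsion dictionary exact in the twisted, non--trivial--bundle setting: one must check that nonvanishing of the canonical Seiberg--Witten invariant converts to \emph{monicness}, that the basic--class bound converts to the degree \emph{equality} rather than an inequality, and that the hypothesis $b_2^+ > 1$ survives pullback to every finite cover so that Taubes' theorem remains applicable throughout.
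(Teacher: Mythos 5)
The statement you are trying to prove is stated in the paper as a \emph{conjecture}, not a theorem, and the paper does not prove it in general. What the paper actually establishes is: (i) Theorem \ref{thm:joint}, which extracts from Taubes' nonvanishing and the Meng--Taubes/Baldridge identification of Seiberg--Witten invariants (applied to all finite covers, via the relation $\Delta_{N,\phi}^{\a} = \Delta_{N_\a,\phi_\a}$) that every twisted Alexander polynomial $\Delta_{N,\phi}^{\a}$ is monic when $\s$ is represented by a symplectic form; and (ii) Theorem \ref{thm:enough}, which says that nonvanishing of all $\Delta_{N,\phi}^{\a}$ implies only that $N$ is \emph{prime}, and yields fibering of $(N,\phi)$ only under the additional hypothesis that $N$ has vanishing Thurston norm or is a graph manifold. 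Combining these, the paper obtains the conjecture in those special cases and leaves the general case open.

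Your first two steps track the paper's strategy accurately (Taubes on finite covers, repackaging via Meng--Taubes and Baldridge into twisted Alexander data), although you should note a small inaccuracy: the admissibility $\phi \cup e = 0$ and the preservation of $b_2^+ > 1$ under pullback are handled in the paper exactly as you describe, so that part is sound. The genuine gap is your third step. The ``fibering criterion'' you invoke --- that a primitive class $\phi$ fibers if and only if all twisted Alexander polynomials are monic and attain the correct degree --- is precisely the statement that is \emph{not} available. You flag its reverse implication as ``the central obstacle'' and ``the deep input,'' but you do not supply an argument for it, and no result in this paper (or in the cited companion papers at the time) provides it in the generality you need. Theorem \ref{thm:enough} is strictly weaker than the criterion you assume. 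As written, your proposal reduces the conjecture to an unproven assertion that is at least as hard as the conjecture itself, so it does not constitute a proof.
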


This problem was first raised in  \cite{FGM91},
but only since the appearance of  Taubes' results on Seiberg-Witten invariants of symplectic $4$--manifolds (see \cite{Ta94,Ta95}) are  there  tools at  hand to seriously tackle the conjecture.
We refer to \cite{Kr98}, \cite{Kr99}, \cite{McC01}, \cite{Ba01}, \cite{Vi03} for results supporting this conjecture.

%==========================================================
\subsection{Surfaces of minimal complexity}
Given a surface $\S$ with connected components $\S_1,\dots,\S_k$ we define its complexity to be
\[ \chi^-(\S)=\sum_{i=1}^k \max\{-\chi(\S_i),0\}.\]
We say that an embedded surface $\S$ in a ($3$-- or $4$--) manifold has minimal complexity if it minimizes the complexity in its homology class.

Given a 3--manifold $N$ and $\phi \in H^1(N;\Z)$ the Thurston norm on $H^1(N,\Z)$ is defined as
\[ \chi^-_N(\phi)= \chi^-(\S)\]
where $\S$ is a surface of minimal complexity dual to $\phi$.
Given a 4--manifold $M$ and $\s \in H^2(M;\Z)$ we also define
\[ \chi^-_M(\s)= \chi^-(\S)\]
where $\S$ is a surface of minimal complexity dual to $\phi$. (Note that in spite of the similar definition, the minimal complexity function on $4$--manifolds is not known to share the properties of the Thurston norm, e.g. linearity. We refer to \cite[Section~7]{Kr98} for more related open questions.)

Let $M$ be a 4--manifold with $b_2^+(M) > 1$ and consider a class $\s \in H^2(M;\Z)$; then the adjunction inequality says that \be \label{equ:adj} \chi^-_M(\s) \geq \s\cdot \s +\s \cdot \k \ee
for any Seiberg-Witten basic class $\k \in H^2(M;\Z)$. Even though this inequality gives useful lower bounds, it is in general not enough to determine the function $\chi^-_M:H^2(M;\Z)\to \N$.
In this paper  we are interested in the study of this function for 4--manifolds $M$ with a free circle action.
Note that it is not difficult to see, in this case (cf. Section \ref{section:sw}), that we can put the absolute value to $\sigma \cdot \sigma$ in equation (\ref{equ:adj}).

Given $\s \in H^2(M;\Z)$ we say that $\s$ has property $(*)$ if there exists a
 (possibly disconnected) embedded  surface $\S \subset N$ and a (possibly disconnected) closed curve $c\subset N$ in general position with the following properties:
\bn
\item $\S$ is a Thurston norm minimizing surface dual to $p_*(\s)$,
\item given a lift $\ti{\S}$ of $\S$ to $M$ (which by Lemma \ref{lem:lift} always exists) the singular surface $p^{-1}(c)\cup \ti{\S}$ represents $PD(\s)$,
\item the geometric intersection number of $\S$ and $c$ is given by the absolute value of the algebraic intersection number $\S\cdot c$.
\en

Property $(*)$ is the suitable generalization of the property defined by Kronheimer in \cite{Kr99} for the product case.
We will see in Section \ref{section:surface} that  property $(*)$ is fairly common; in particular, it is satisfied whenever the Alexander polynomial $\Delta_{N,p_*(\s)}\ne 0\in \zt$. On the other hand we will also see that, for suitable $N$, there exist $\s$ which do not satisfy $(*)$.
In  Section \ref{section:surface} we prove the following lemma (cf. also \cite[Section~1.2]{Kr99}).

\begin{lemma} \label{lem:min}
Let $\s\in H^2(M;\Z)$ that satisfies $(*)$, then
\[ \chi^-_M(\s) \leq \chi^-_N(p_*(\s))+|\s\cdot \s|.\]
\end{lemma}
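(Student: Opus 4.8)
The plan is to construct an explicit surface in $M$ dual to $\s$ and estimate its complexity directly from property $(*)$. Suppose $\s$ satisfies $(*)$, so we are given a Thurston norm minimizing surface $\S \subset N$ dual to $p_*(\s)$, a closed curve $c \subset N$, and a lift $\ti{\S}$ to $M$ such that the singular surface $p^{-1}(c) \cup \ti{\S}$ represents $PD(\s)$. First I would analyze the two pieces separately. The lift $\ti\S$ is a surface in $M$ that maps homeomorphically (or at worst as a covering) onto $\S$ under $p$; since $p$ is a circle bundle projection restricted to $\ti\S$ it is a diffeomorphism onto $\S$, so $\chi^-(\ti\S) = \chi^-(\S) = \chi^-_N(p_*(\s))$. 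The second piece $p^{-1}(c)$ is the preimage of a closed curve under the circle bundle, hence a disjoint union of tori (each component of $c$ being a circle, its preimage is a $T^2$ or Klein bottle, but orientability forces tori), so $\chi^-(p^{-1}(c)) = 0$.

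The union $p^{-1}(c) \cup \ti\S$ is only a \emph{singular} surface: it has intersection points wherever $\ti\S$ meets $p^{-1}(c)$, and these occur exactly over the intersection points of $\S$ with $c$ in $N$. The key step is then to resolve these transverse double points without increasing the complexity beyond the predicted amount. At each intersection point I would perform the standard oriented cut-and-paste (the oriented resolution compatible with the orientations), which replaces a neighborhood of the double point by an embedded annulus and thus produces a genuinely embedded surface $\S'$ still representing $PD(\s)$. Each such resolution changes the Euler characteristic by a controlled amount; summing over all intersection points, the total contribution to $\chi^-$ is bounded by the \emph{geometric} intersection number of $\ti\S$ with $p^{-1}(c)$, which equals the geometric intersection number of $\S$ with $c$.

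Here condition (3) of property $(*)$ does the crucial work: the geometric intersection number of $\S$ and $c$ equals $|\S \cdot c|$, the \emph{algebraic} intersection number. I would then identify this algebraic number with $|\s \cdot \s|$. This is the natural identification: the self-intersection $\s \cdot \s$ in $M$ is computed by pairing $\s$ with itself, and under the decomposition $PD(\s) = [p^{-1}(c)] + [\ti\S]$ the self-intersection reduces (the torus part $p^{-1}(c)$ contributes nothing to its own self-intersection since $\im\{H_1(N) \to H_2(M)\}$ is isotropic, as noted in the introduction) to the cross term $[\ti\S] \cdot [p^{-1}(c)]$, which is precisely $\S \cdot c$ computed in $N$. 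Assembling the pieces gives
\[ \chi^-_M(\s) \leq \chi^-(\S') \leq \chi^-(\ti\S) + \chi^-(p^{-1}(c)) + |\S \cdot c| = \chi^-_N(p_*(\s)) + |\s \cdot \s|, \]
as claimed.

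The main obstacle I anticipate is the bookkeeping in the resolution step: one must verify that the oriented resolution at each double point genuinely lowers to an embedded surface representing the correct homology class, and that the resolutions at distinct points can be carried out simultaneously (they occur in disjoint neighborhoods, so this should be fine) while accounting correctly for the sign conventions that let the \emph{algebraic} intersection number — rather than the larger geometric one — control the final complexity. Condition (3) is precisely engineered so that geometric and algebraic intersection numbers coincide, which is what makes the cross term $|\s\cdot\s|$ appear with the right constant; without it one would only obtain a bound in terms of the geometric intersection number. I would also need the elementary Lemma \ref{lem:lift} (asserted in the statement of $(*)$) guaranteeing existence of the lift $\ti\S$.
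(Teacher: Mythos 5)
Your approach is exactly the paper's: split $PD(\s)$ as $[\ti\S]+[p^{-1}(c)]$, note that $p^{-1}(c)$ is a union of tori of zero complexity, resolve each transverse double point by an annulus, and use condition (3) of $(*)$ to convert the geometric count into $|\S\cdot c|$. However, your bookkeeping contains two factor-of-$2$ errors that happen to cancel, and since you flagged this bookkeeping as the delicate point you should fix both. First, each oriented resolution removes two disks ($\chi=2$) and glues in an annulus ($\chi=0$), so it \emph{decreases} $\chi$ by $2$; the total cost of the resolutions is therefore $2\,|\S\cdot c|$, not $|\S\cdot c|$ as you claim. Second, expanding $\s\cdot\s=\bigl([\ti\S]+[p^{-1}(c)]\bigr)^2$ the cross term appears twice, so (using $[\ti\S]^2=0$, which you should also justify, e.g.\ by pushing $\ti\S$ off itself in the fiber direction, and $[p^{-1}(c)]^2=0$ by isotropy) one gets $\s\cdot\s=2\,\S\cdot c$, not $\S\cdot c$. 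With both corrections the final line reads $\chi^-(T)=\chi^-_N(p_*(\s))+2|\S\cdot c|=\chi^-_N(p_*(\s))+|\s\cdot\s|$, which is the paper's computation; as written, your two intermediate identities are each false even though their composite gives the right answer.
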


Kronheimer \cite[Corollary~2]{Kr99} \cite[Corollary~7.6]{Kr98} proved that in the case that $M$ is a product
$S^1\times N$, in many cases this is in fact an equality.
More precisely, Kronheimer proved the following result.

\begin{theorem} \label{thm:kronheimer}
Let $N$ be an irreducible 3--manifold whose Thurston norm is not totally degenerate, $M=S^1\times N$ and $\s\in H^2(S^1\times N;\Z)$. Then
\[   \chi^-_M(\s) \geq \chi^-_N(p_*(\s))+|\s\cdot \s|.\]
\end{theorem}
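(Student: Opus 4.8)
The asserted inequality is the Seiberg--Witten lower bound that complements Lemma \ref{lem:min}; combined with that lemma it yields equality whenever property $(*)$ holds. The plan is to feed a well--chosen basic class into the adjunction inequality. Within the running hypothesis $b_2^+(M)>1$ the inequality (\ref{equ:adj}) is available, and, as noted just after it, we may insert the absolute value of the self--intersection, so that for every Seiberg--Witten basic class $\k\in H^2(M;\Z)$ we have
\[ \chi^-_M(\s)\geq |\s\cdot\s|+\s\cdot\k. \]
It therefore suffices to exhibit a single basic class $\k$ with $\s\cdot\k\geq \chi^-_N(p_*(\s))$.

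First I would determine the basic classes of the product $M=S^1\times N$. By the dimensional reduction of Seiberg--Witten theory on a product (the Meng--Taubes formula), the spin$^{c}$ structures with nontrivial invariant are exactly those pulled back from $N$; their first Chern classes are the pullbacks $\k=p^*(\beta)$ of the classes $\beta=c_1(\mathfrak s)\in H^2(N;\Z)$ of the basic spin$^{c}$ structures $\mathfrak s$ of the $3$--manifold $N$. In particular, under the K\"unneth splitting $H^2(M;\Z)\cong H^2(N;\Z)\oplus H^1(N;\Z)$, every basic class has trivial component in the $H^1(N)$ summand. Writing $\phi=p_*(\s)$ and using the cup--product structure of $H^*(S^1\times N;\Z)$, a direct computation gives
\[ \s\cdot p^*(\beta)=\langle \beta\cup\phi,[N]\rangle=\langle\beta,PD(\phi)\rangle, \]
the evaluation of $\beta$ on the class of a surface dual to $\phi$ in $N$.

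The crux is then the purely $3$--dimensional statement that, for $N$ irreducible with Thurston norm not totally degenerate, the basic classes detect the Thurston norm:
\[ \chi^-_N(\phi)=\max_{\mathfrak s\ \mathrm{basic}}\ \langle c_1(\mathfrak s),PD(\phi)\rangle. \]
Granting this, I would choose the basic $\mathfrak s$ realizing the maximum and set $\k=p^*(c_1(\mathfrak s))$; then $\s\cdot\k=\chi^-_N(\phi)$, and the displayed adjunction inequality gives precisely $\chi^-_M(\s)\geq |\s\cdot\s|+\chi^-_N(p_*(\s))$. (Basic classes occur in $\pm$ pairs, which is also what justifies the absolute value on $\s\cdot\s$.)

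The main obstacle is exactly this detection statement. The adjunction inequality yields only the easy bound $\max_{\mathfrak s}\langle c_1(\mathfrak s),PD(\phi)\rangle\leq\chi^-_N(\phi)$; the reverse inequality---that some basic class attains the Thurston norm---is the deep input, and it is here that the hypotheses enter. Irreducibility and non--degeneracy of the Thurston norm ensure, through the identification of the Seiberg--Witten (Milnor--Turaev) torsion of $N$ with its Thurston norm (McMullen's Alexander--norm bound together with the Meng--Taubes theorem, the sharpness ultimately resting on the existence of taut foliations), that the $3$--dimensional invariants see the full norm. By comparison, the reduction through the adjunction inequality and the cohomological computation of $\s\cdot\k$ are routine.
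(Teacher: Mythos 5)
The paper does not actually prove Theorem \ref{thm:kronheimer}; it quotes it from Kronheimer \cite{Kr99,Kr98}, so there is no internal proof to compare against. Judged on its own merits, your argument has a genuine gap at precisely the step you flag as the crux. The detection statement
\[ \chi^-_N(\phi)=\max_{\mathfrak s\ \mathrm{basic}}\ \langle c_1(\mathfrak s),PD(\phi)\rangle \]
is false for irreducible $N$ with non--totally--degenerate Thurston norm, and this paper itself contains a counterexample: the manifold $T_K$ of Section \ref{section:examples}, obtained by splicing the Conway knot exterior into $T^3$, is irreducible, has $b_1=3$ and $\chi^-_{T_K}(\phi)=6$, yet $\Delta_{T_K}=1$, so by Meng--Taubes (Theorem \ref{thm:mt}) every basic class $\xi$ satisfies $f(\xi)=0$ and hence pairs to zero with $PD(\phi)$. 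The right-hand side of your detection statement is $0$, not $6$, so your chain of reductions only yields $\chi^-_{S^1\times T_K}(\s)\geq|\s\cdot\s|$, strictly weaker than Kronheimer's bound. This failure is exactly the phenomenon the paper is organized around: Theorem \ref{thm:joint} and Proposition \ref{prop:ex} exist because the unperturbed adjunction inequality on $M$ itself is not strong enough, and one must pass to finite covers and twisted Alexander polynomials to recover the lost information.

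The misstep is in the sentence invoking McMullen and Meng--Taubes: McMullen's inequality goes the wrong way (Alexander norm $\leq$ Thurston norm) and is not sharp in general, as the Conway knot already shows. Kronheimer's actual proof does use Gabai's taut foliations, but the foliation enters through the Eliashberg--Thurston perturbation to a contact structure and a \emph{perturbed} (large closed two-form) version of the Seiberg--Witten equations on $S^1\times N$, whose nonvanishing is not governed by the Milnor--Turaev torsion; the ordinary basic classes of $S^1\times N$, which are all your argument uses, simply do not see the Thurston norm. The steps you call routine --- the absolute value on $\s\cdot\s$, the computation $\s\cdot p^*(\beta)=\langle\beta,PD(\phi)\rangle$, and the Meng--Taubes description of which spin$^c$ structures can be basic --- are fine, but the proof cannot be completed along these lines without replacing the unperturbed invariants by the foliation-perturbed ones.
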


%As Kronheimer points out (see remark after \cite[Corollary~3]{Kr99}), the extra assumption $\chi^-_N(p_*(\s))>0$ is in many cases unnecessary.\footnote{In principle %one can now add a few examples of our own: if we have a Whitehead double
%of a knot where the twisted Alexander polynomial detects the genus, then with
%the methods below we will have found $\chi^-{S^1\times N(K)}$, but then perhaps these guys have smooth foliations anyway?}
Note that Kronheimer showed that  it is not always necessary to assume  that $N$ is irreducible (cf. \cite[Theorem~10]{Kr99}) and to assume that the Thurston norm is non--degenerate (cf.
\cite[Section~1.2]{Kr99}).
It now seems reasonable to pose the following question (cf. also \cite[Question~7.12]{Kr98}).

\begin{question} \label{conj:kronheimer}
Let $M\to N$ be a principal $S^1$--bundle.
Given $\s\in H^2(M;\Z)$, does the inequality
\[   \chi^-_M(\s) \geq \chi^-_N(p_*(\s))+|\s\cdot \s|\] always hold?
Is this an equality even for $\s$ which do not satisfy $(*)$?
\end{question}

Note that given
$\s\in H_2(M;\Z)$ we always have the inequality
\[   \chi^-_M(\s) \geq \chi^-_N(p_*(\s)).\]
This  follows from Gabai's result \cite{Ga83} that for any singular surface $S'\subset N$ dual to some $\phi\in H^1(N;\Z)$ we have
$\chi^-(S')\geq \chi^-_N(\phi)$.

%==========================================================
\subsection{Statement of the main results}

Given a 3--manifold $N$, $\phi \in H^1(N;\Z)$ and an epimorphism $\a:\pi_1(N)\to G$ onto a finite group we can define the $1$--variable twisted Alexander polynomial $\Delta_{N,\phi}^\a \in \zt$. We refer to Section \ref{section:twialex} for details. Given a non--zero Laurent polynomial $\sum_{i=n}^{m}a_it^i$ with $a_n \neq 0$ and $a_m \neq 0$ we define \[ \mbox{deg} \big(\sum_{i=n}^{m}a_it^i\big) = m - n; \] furthermore we define $\mbox{deg}(0) = - \infty$. Given an integer $n$ we write $- \infty + n = - \infty$ and
$- \infty < n$.

We will show that the degrees of twisted Alexander polynomials give lower bounds on $\chi^-_M(\s)$
for a given $\s\in H^2(M;\Z)$.
Note that if $p_*(\s)=0\in H^1(N;\Z)$, then $\s$ is clearly dual to a union of  embedded tori and hence  $\chi^-_M(\s)=0$. We therefore restrict ourselves now to the case that $\phi=p_*(\s)\ne 0 \in H^1(N;\Z)$.

Given $\phi \in H^1(N,\Z) = \mbox{Hom}(\pi_1(N),\Z)$ and a homomorphism $\a:\pi_1(N)\to G$, we denote by $\phi_{\a}$ its restriction to $\mbox{ker} \alpha \subset \pi_1(N)$. Furthermore we denote by $\div \phi_\a$ the divisibility of $\phi_\a$, i.e. the largest integer $n$ such that $\frac{1}{n}\phi_\a$ still defines an integral class. Our main theorem is now the following.

\begin{theorem} \label{thm:joint} Let $M$ be a 4--manifold admitting a free circle
action  such that $b_2^+(M) > 1$ and such that either $e=0$ or $e$ is non--torsion. Let $\s \in H^{2}(M;\Z)$ such that $\phi=p_*(\s)\ne 0 \in H^1(N;\Z)$.
 Then for any epimorphism $\alpha : \pi_1(N) \to G$ onto a finite group we have
 \[ \chi^-_M(\s) \geq \frac{1}{|G|}( \mbox{deg}(\Delta_{N,\phi}^{\a}) - 2 \mbox{div}\, \phi_{\a}) +|\s\cdot \s|.\]
Furthermore, if $\s$ is represented by a symplectic form, then $\Delta_{N,\phi}^{\alpha}$ is monic.
\end{theorem}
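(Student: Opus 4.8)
The plan is to derive both assertions from the $4$--dimensional adjunction inequality (\ref{equ:adj}), transported to the finite cover determined by $\a$, together with the identification of the Seiberg--Witten basic classes of a principal $S^1$--bundle with the Milnor--Turaev torsion (equivalently, the Alexander polynomial) of its base. The first move is to trade the twisted object on $N$ for an untwisted one on a cover. Let $\ti N \to N$ be the connected $|G|$--fold cover corresponding to $\ker\a \subset \pi_1(N)$, and let $q : \ti M \to M$ be the pullback of the principal $S^1$--bundle, so that $q$ is a $|G|$--fold cover and $\ti M \to \ti N$ is again a principal circle bundle. Writing $\ti\s = q^*\s$, one has $p_*\ti\s = \phi_\a$, the restriction of $\phi$ to $\pi_1(\ti N) = \ker\a$. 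The standard Shapiro--type property of twisted Alexander polynomials identifies $\Delta_{N,\phi}^{\a}$ with the ordinary Alexander polynomial $\Delta_{\ti N,\phi_\a}$ of the cover, so that, after multiplying through by $|G|$, the right--hand side of the claimed inequality becomes an expression about $\ti M$ and $\ti N$ alone.

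Next I set up the comparison of complexities under $q$. If $\S \subset M$ minimizes complexity in the class $PD(\s)$, then $q^{-1}(\S)$ is dual to $\ti\s$ and satisfies $\chi^-(q^{-1}(\S)) = |G|\,\chi^-_M(\s)$ (components with $\chi \ge 0$ pull back to such components, and negative Euler characteristic scales by the degree); hence $\chi^-_{\ti M}(\ti\s) \leq |G|\,\chi^-_M(\s)$, i.e. $\chi^-_M(\s) \geq \tfrac{1}{|G|}\chi^-_{\ti M}(\ti\s)$. It therefore suffices to bound $\chi^-_{\ti M}(\ti\s)$ from below by adjunction on $\ti M$. For this I must check $b_2^+(\ti M) > 1$: this follows from (\ref{betti}) together with $\sign(\ti M)=0$, the fact that a finite cover does not decrease the first Betti number of $\ti N$, and the fact that the pullback of a non--torsion Euler class remains non--torsion (transfer makes $H^2(N;\Q)\to H^2(\ti N;\Q)$ injective). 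Granting this, the absolute--value form of (\ref{equ:adj}) on $\ti M$ gives, for any Seiberg--Witten basic class $\ti\k$,
\[ \chi^-_{\ti M}(\ti\s) \geq |\ti\s\cdot\ti\s| + \ti\s\cdot\ti\k = |G|\,|\s\cdot\s| + \ti\s\cdot\ti\k, \]
using $\ti\s\cdot\ti\s = |G|\,\s\cdot\s$. Combining, $\chi^-_M(\s) \geq |\s\cdot\s| + \tfrac{1}{|G|}\,\ti\s\cdot\ti\k$ for every basic class $\ti\k$.

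The heart of the argument, and the step I expect to be the main obstacle, is to exhibit a basic class $\ti\k$ of $\ti M$ with $\ti\s\cdot\ti\k \geq \deg\Delta_{\ti N,\phi_\a} - 2\,\div\phi_\a$. Here I would invoke the computation of the Seiberg--Witten invariants of a free--circle--action $4$--manifold in terms of the $3$--dimensional Seiberg--Witten invariant of its base, following Baldridge \cite{Ba01} and the Meng--Taubes philosophy. Under this correspondence the projections $\{\,p_*\ti\k : \ti\k \text{ basic}\,\} \subset H^1(\ti N;\Z)$ fill out, up to a factor of $2$, the Newton polytope of the torsion of $\ti N$; the pairing $\ti\s\cdot\ti\k$ is linear in $\ti\k$ and, for these basic classes, is controlled through the bundle structure by $\langle \phi_\a, p_*\ti\k\rangle$. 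The extent of the polytope in the direction $\phi_\a$ is the degree of the torsion, and passing from the torsion back to the Alexander polynomial reintroduces a $(t^{\div\phi_\a}-1)^2$--type factor, which accounts precisely for the correction $2\,\div\phi_\a$ and explains why the statement carries $\deg\Delta_{N,\phi}^{\a} - 2\,\div\phi_\a$ rather than the bare degree. Choosing $\ti\k$ extremal in the direction $\phi_\a$ and using the $\ti\k \leftrightarrow -\ti\k$ symmetry of the basic classes, the factor of $2$ in the correspondence cancels the factor $\tfrac12$ coming from symmetry and yields the required pairing. The careful bookkeeping of these two factors of $2$, and the verification that the torsion polytope genuinely realizes the degree of the (twisted) Alexander polynomial, is the delicate part.

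Finally, for the monicness assertion I would appeal to Taubes' theorems \cite{Ta94,Ta95}. If $\s$ is represented by a symplectic form then, after pulling back to $\ti M$, the canonical $\mathrm{spin}^c$ structure is a basic class realizing the extreme value of $\ti\s\cdot\ti\k$, and its Seiberg--Witten invariant is $\pm 1$. Under the torsion correspondence this extremal Seiberg--Witten coefficient is the leading coefficient of $\Delta_{\ti N,\phi_\a}$, hence of $\Delta_{N,\phi}^{\a}$; the value $\pm 1$ forces that coefficient to be a unit, so $\Delta_{N,\phi}^{\a}$ is monic. This last part recovers the obstruction already exploited by the authors in \cite{FV07,FV08,FV08a}, whereas the inequality obtained in the previous paragraphs is the new content.
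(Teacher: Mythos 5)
Your proposal follows the paper's own proof essentially step for step: pass to the $G$--cover, identify $\Delta_{N,\phi}^{\a}$ with $\Delta_{N_\a,\phi_\a}$ via Shapiro's lemma, compare $\chi^-$ and self--intersection numbers under the covering map, and apply the untwisted degree bound on the cover obtained from Meng--Taubes, Baldridge's formula and the absolute--value form of the adjunction inequality, with Taubes' constraints supplying monicness in the symplectic case. The ``delicate bookkeeping'' you flag is handled in the paper by the explicit identity $\Delta_{N,\phi} = \pm(t^{\div\,\phi}-1)^2\sum_{\kappa\in p^*H^2(N)}SW_M(\kappa)\,t^{\frac12\s\cdot\kappa}$, from which both the bound $\max_\kappa \kappa\cdot\s \geq \deg(\Delta_{N,\phi})-2\div(\phi)$ and the monicness follow by reading off extreme exponents exactly as you describe.
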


Note that the last statement is already contained in \cite{FV08a} and \cite{FV07}.
The proof of the theorem relies on the adjunction inequality and Taubes' results applied to finite coverings of $M$. The resulting information on Seiberg--Witten invariants can be translated into information on twisted Alexander polynomials using the results of Meng and Taubes \cite{MT96}, Baldridge \cite{Ba01}, \cite{Ba03} and Shapiro's lemma.

This theorem allows us to study the existence of symplectic structures and the complexity of surfaces using twisted Alexander polynomials.
In \cite{FV08a}, \cite{FV08} and \cite{FV07} the authors showed that twisted Alexander polynomials are very efficient at detecting fibered 3--manifolds
(cf. also \cite{Ch03}, \cite{GKM05}, \cite{FK06} and \cite{Ki08}). The main result therein contained is the following.
%In particular we proved  the following result. (We refer to \cite{FV08,FV07} for details on the case $b_2^+(M)=1$.)

\begin{theorem} \label{thm:enough}
Let $N$ be a  3--manifold.
Let $\phi \in H^1(N;\Z)$ be non--trivial. If $\Delta_{N,\phi}^\a\ne 0$ for all epimorphisms $\a:\pi_1(N)\to G$ onto finite groups, then $N$ is prime.
Furthermore, if $N$ has either vanishing Thurston norm or is a graph manifold, then  $(N,\phi)$ fibers over $S^1$.
\end{theorem}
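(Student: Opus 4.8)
The plan is to prove the two assertions by rather different mechanisms, using throughout Shapiro's lemma in the form that identifies $\Delta_{N,\phi}^{\a}$, up to units in $\zt$, with the ordinary one--variable Alexander polynomial $\Delta_{\tilde N,\tilde\phi}$ of the finite regular cover $\tilde N\to N$ determined by $\ker\a$, where $\tilde\phi=p^{*}\phi$. Under this dictionary the hypothesis ``$\Delta_{N,\phi}^{\a}\ne 0$ for all $\a$'' becomes ``$\Delta_{\tilde N,\tilde\phi}\ne 0$ for every finite regular cover $\tilde N$ of $N$,'' and recall that $\Delta_{\tilde N,\tilde\phi}=0$ exactly when the Alexander module $H_{1}(\tilde N;\zt)$ has positive $\zt$--rank.

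For primeness I would argue by contraposition, the computational input being the following lemma: if a closed $3$--manifold splits as a connected sum $P\# Q$ on which $\tilde\phi$ restricts non--trivially to \emph{both} summands, then $H_{1}(P\# Q;\zt)$ has positive rank and hence $\Delta_{P\# Q,\tilde\phi}=0$. This is a Mayer--Vietoris computation in the infinite cyclic cover, where the lifts of the connecting sphere cut the cover into infinitely many pieces and produce a free $\zt$--summand. Now suppose $N=N_{1}\# N_{2}$ is a non--trivial decomposition, ordered so that $\phi|_{N_{1}}\ne 0$; since $N_{2}\ne S^{3}$ we have $\pi_{1}(N_{2})\ne 1$, and by residual finiteness of $3$--manifold groups there is an epimorphism $\pi_{1}(N_{2})\to G$ onto a non--trivial finite group. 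Extending by the trivial map on $\pi_{1}(N_{1})$ produces $\a:\pi_{1}(N)=\pi_{1}(N_{1})*\pi_{1}(N_{2})\to G$, whose cover $\tilde N$ is the connected sum of the $G$--cover of $N_{2}$ with $|G|\ge 2$ copies of $N_{1}$, on each of which $\tilde\phi$ pulls back the non--zero class $\phi|_{N_{1}}$. Grouping one copy of $N_{1}$ against the rest and applying the lemma gives $\Delta_{N,\phi}^{\a}=\Delta_{\tilde N,\tilde\phi}=0$, a contradiction; hence $N$ is prime.

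For the fibering statement I may assume $N$ prime, and after setting aside the trivial $S^{1}\times S^{2}$ case and passing to a primitive class, assume $N$ irreducible. By Stallings' theorem it suffices to show that $\ker(\phi:\pi_{1}(N)\to\Z)$ is finitely generated, which I would read off the JSJ/Seifert structure. The guiding principle is that an essential torus $T$ on which $\phi$ vanishes plays the role that the connecting sphere played above: if such a $T$ can be made homologically non--separating in a finite cover, then the infinite cyclic cover acquires a free Alexander summand and the corresponding $\Delta^{\a}$ vanishes. Thus I would show that a non--fibered graph manifold (respectively a prime manifold of vanishing Thurston norm, which by geometrization is a graph manifold together with small explicitly checkable families such as Sol torus bundles, the latter fibering tautologically) must contain a Seifert piece on whose regular fiber $\phi$ vanishes, and then use separability of the relevant vertical subsurface subgroups in $\pi_{1}(N)$ to pass to a finite cover in which the associated vertical torus becomes non--separating. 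Contrapositively, non--vanishing of all $\Delta_{N,\phi}^{\a}$ forces $\phi$ to be horizontal on every Seifert piece, and for graph manifolds the periodicity of the Seifert monodromies makes the horizontal fibrations of the pieces glue across the JSJ tori, so $(N,\phi)$ fibers.

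The main obstacle is precisely this last step: converting the geometric obstruction to fibering, which lives in the JSJ decomposition, into an honest vanishing of some twisted Alexander polynomial. This requires knowing that the vertical subsurface group is separable in $\pi_{1}(N)$, so that the obstruction is realized in an actual finite cover, and that the Thurston norm behaves multiplicatively enough under that cover for the degree bookkeeping to survive. Arranging such covers for \emph{every} vertical configuration at once, rather than for a single convenient $\a$, is where the genuine work lies, and it is exactly this control over the finite covers of graph manifolds that makes the clean ``non--vanishing implies fibering'' conclusion available here while it fails, a priori, for a general irreducible $N$.
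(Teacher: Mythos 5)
A preliminary remark: the paper does not actually prove Theorem \ref{thm:enough}. It is imported from \cite{FV08a}, \cite{FV08} and \cite{FV07}, with the primeness conclusion credited to McCarthy \cite{McC01}, so your proposal can only be measured against the arguments of those sources. Against that standard, the primeness half of your argument is correct and essentially complete, and it runs along the same lines. The translation $\Delta_{N,\phi}^{\a}\doteq\Delta_{N_\a,\phi_\a}$ is exactly the Shapiro-type identity the present paper itself invokes in the proof of Theorem \ref{thm:joint}; your Mayer--Vietoris lemma is right (the lifts of the connecting sphere form a free $\zt$--orbit in degree zero, and the kernel of $\zt\to\zt/(t^{d_P}-1)\oplus\zt/(t^{d_Q}-1)$ is a nonzero ideal, so $H_1$ of the infinite cyclic cover has positive rank and the order vanishes); and passing to a finite quotient of $\pi_1(N_2)$ so that at least two lifts of $N_1$ appear, each carrying the pulled-back class nontrivially, closes the argument. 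You should flag that you are using geometrization twice, for residual finiteness of $\pi_1(N_2)$ and to exclude fake $3$--spheres so that a nontrivial summand has nontrivial fundamental group; the sources use the same inputs.

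The fibering half is where the genuine gap lies: what you have written is the right strategy, but the two steps you defer are not routine, and they constitute the main content of \cite{FV08} and \cite{FV07}. First, from the failure of $(N,\phi)$ to fiber you must produce an essential torus $T$ (vertical in some Seifert piece, or a JSJ torus) with $\phi|_T=0$; this requires the Eisenbud--Neumann analysis of fibered classes on graph manifolds and the classification of prime manifolds with vanishing Thurston norm, neither of which is a one-line reduction. Second, and more seriously, you must then exhibit a single finite \emph{regular} cover $N_\a\to N$ in which a lift of $T$ is non--separating while the pulled-back class still vanishes on it; the separability statement this rests on --- separability of the relevant vertical surface and torus subgroups of a graph manifold group, in a form strong enough to control the homology class of the lifted torus --- is precisely the technical heart of \cite{FV08} and cannot be quoted as a black box. (Your mechanism for concluding $\Delta^\a=0$ from a non--separating torus on which the class vanishes is correct, and the appeal to Stallings is in fact unnecessary, since the actual arguments are contrapositive.) In short: you have proved primeness; for fibering you have reproduced a faithful outline whose decisive lemmas remain unproved.
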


The primeness conclusion has first been proved by McCarthy \cite{McC01}.  In particular the combination of Theorems \ref{thm:joint} and \ref{thm:enough} gives an affirmative answer to Conjecture \ref{conjfolk} if the orbit space $N$ has vanishing Thurston norm or if $N$ is a graph manifold. (For the case of vanishing Thurston norm, a proof of the Conjecture along the same lines is presented in \cite{Bow07}.)

Whereas twisted Alexander polynomials are good at detecting fibered 3--manifolds,  their record at detecting the Thurston norm of a given $\phi\in H^1(N;\Z)$ is rather mixed. On the one hand we prove  in Section \ref{section:examples} the following result.

\begin{theorem}
There exists  a 4--manifold $M$ with free circle action with non--torsion Euler class and $\s\in H^2(M;\Z)$ where the
 adjunction inequality is not strong enough to determine $\chi^-_M(\s)$, but where the
 bounds from Theorem \ref{thm:joint} on $\chi^-_M(\s)$ coming from $\Delta_{N}^\a$ for an appropriate $\a:\pi_1(N)\to G$ determine $\chi^-_M(\s)$.
 \end{theorem}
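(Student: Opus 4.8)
The statement is an existence result, so the plan is to exhibit an explicit example, and the whole construction reduces to finding a suitable orbit space $N$. The conceptual point is that the adjunction inequality \eqref{equ:adj}, for a $4$--manifold with free circle action, corresponds \emph{precisely} to the trivial quotient in Theorem \ref{thm:joint}: by the Seiberg--Witten computation underlying that theorem (Meng--Taubes \cite{MT96} and Baldridge \cite{Ba01,Ba03}, cf. Section \ref{section:sw}), the best lower bound the basic classes can produce is the one governed by the \emph{untwisted} Alexander polynomial, namely $\deg\Delta_{N,\phi}-2\div\phi+|\s\cdot\s|$. Thus ``the adjunction inequality is not strong enough'' is the same as ``the $G$ trivial bound is strictly smaller than $\chi^-_M(\s)$,'' while ``twisted Alexander polynomials determine $\chi^-_M(\s)$'' means that for an appropriate nontrivial $\a:\pi_1(N)\to G$ the bound of Theorem \ref{thm:joint} is sharp. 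So it suffices to find a pair $(N,\phi)$ for which some twisted Alexander polynomial detects the Thurston norm while the ordinary one does not, and then feed it into the circle--bundle machinery.

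Concretely, I would start from a knot $K\subset S^3$ (for instance one of the norm--detecting examples exploited in \cite{FK06} and in the authors' earlier work) whose Seifert genus is \emph{not} detected by $\Delta_K$ but \emph{is} detected by a twisted Alexander polynomial over a finite group, and build a closed $N$ from it. The requirement
\[ \deg\Delta_{N,\phi}-2\div\phi \;<\; \chi^-_N(\phi)\;=\;\tfrac{1}{|G|}\bigl(\deg\Delta^{\a}_{N,\phi}-2\div\phi_{\a}\bigr) \]
is then exactly the untwisted/twisted gap. To make the bundle construction go through I need $b_1(N)\ge 2$: indeed a non--torsion Euler class $e\in H^2(N;\Z)$ forces, by the Gysin sequence \eqref{gysin}, that a class $\s$ with $p_*(\s)=\phi$ exist only when $\phi\cup e=0$, and when $b_1(N)=1$ Poincar\'e duality makes the pairing $H^1\times H^2\to H^3$ unimodular, so any non--torsion $e$ pairs nontrivially with $\phi$. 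Hence I would build $N$ with $b_1(N)=2$, keeping $\phi$ in the ``knot direction'' while a second homology direction carries $e$, arranged (by the summand structure of the cup product) so that $\phi\cup e=0$. Then let $M\to N$ be the corresponding principal bundle, choose $\s$ with $p_*(\s)=\phi$, and compute $\s\cdot\s$ directly from $e$ and $\phi$ via the bundle structure; since $|\s\cdot\s|$ enters both bounds identically it drops out of the comparison.

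With $(N,\phi,e,\s)$ in hand, the verification has three routine parts. First, property $(*)$ holds for $\s$ because $\Delta_{N,p_*(\s)}\ne 0$ (see Section \ref{section:surface}), so Lemma \ref{lem:min} gives the upper bound $\chi^-_M(\s)\le \chi^-_N(\phi)+|\s\cdot\s|$. Second, Theorem \ref{thm:joint} applied to the good quotient $\a$ gives the matching lower bound $\chi^-_M(\s)\ge \tfrac{1}{|G|}(\deg\Delta^{\a}_{N,\phi}-2\div\phi_{\a})+|\s\cdot\s|=\chi^-_N(\phi)+|\s\cdot\s|$; together these pin down $\chi^-_M(\s)$ exactly. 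Third, the adjunction bound equals the $G$ trivial bound $\deg\Delta_{N,\phi}-2\div\phi+|\s\cdot\s|$, which by the choice of $K$ is strictly smaller, so adjunction alone cannot reach the value.

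The hard part is the first input: producing a $3$--manifold with a genuine gap between the untwisted and twisted Thurston--norm estimates while simultaneously meeting the topological side--conditions ($b_1(N)\ge 2$, a non--torsion $e$ with $\phi\cup e=0$, and $\Delta_{N,\phi}\ne 0$ so that $(*)$ holds). The existence of norm--detecting twisted polynomials is the deep ingredient and rests on the earlier results, whereas the bundle bookkeeping is mechanical. A secondary point that must be established carefully, rather than merely asserted, is that the Seiberg--Witten basic classes of $M$ are genuinely exhausted by the untwisted torsion, so that the claimed \emph{insufficiency} of the adjunction inequality is a theorem and not just the failure of one particular estimate.
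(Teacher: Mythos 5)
Your strategy coincides with the paper's: the authors also prove the theorem by exhibiting a single explicit example in which the adjunction inequality degenerates while a twisted bound from Theorem \ref{thm:joint}, matched against the upper bound of Lemma \ref{lem:starmin}, pins down $\chi^-_M(\s)$. You correctly identify every side condition required of the orbit space ($b_1(N)\ge 2$, a non--torsion $e$ with $\phi\cup e=0$ so that $\phi$ lies in the image of $p_*$, $\Delta_{N,\phi}\ne 0$ so that property $(*)$ and hence the upper bound holds, and a finite quotient whose twisted polynomial closes the gap left by the untwisted one). The genuine gap is that for an existence theorem the witness \emph{is} the proof, and you never produce one: the ``deep ingredient'' you defer to earlier work is precisely the content of the paper's argument.

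The paper's witness (Proposition \ref{prop:ex}) is the following. Splice the exterior of the Conway knot $K=11_{401}$ (genus $3$, $\Delta_K=1$) into the $3$--torus along a primitive curve to obtain $T_K$ with $\chi^-_{T_K}(\phi)=6$ and $\Delta_{T_K}=1$, and take $e=PD(z)$. Then $SW_{M_K(e)}=1$ by Theorems \ref{thm:mt} and \ref{thm:baldridge}, so the only basic class is zero and adjunction yields only $\chi^-_M(\s)\ge|\s\cdot\s|$; this computation also settles the point you rightly flagged at the end, namely that one must control \emph{all} basic classes rather than just the Alexander--polynomial bound, since cancellation in the Meng--Taubes sum could a priori make the latter weaker than adjunction. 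The matching lower bound comes from an explicit representation $\a:\pi_1\to A_5$ found by computer, together with a Mayer--Vietoris computation giving $\deg(\Delta^{\a}_{T_K,\phi})=\deg(\Delta^{\a}_{K,\phi})+2|G|\ge 209+120$, whence $\chi^-_{M_K(e)}(\s)\ge \frac{327}{60}+|\s\cdot\s|$. Note that $\frac{327}{60}<6$: the argument is completed only by invoking the integrality of $\chi^-$, a wrinkle your template (which assumes the twisted bound exactly equals $\chi^-_N(\phi)$) does not anticipate but which is essential in the actual example.
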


 On the other hand we will see in Lemma \ref{lem:bad} that if the ordinary Alexander polynomial $\Delta_{N,\phi}$ vanishes, then all twisted Alexander polynomials $\Delta_{N,\phi}^\a$ vanish as well. Note that this effect would not happen if we used twisted Alexander polynomials corresponding
  to general representations $\a:\pi_1(N)\to \gl(\C,k)$ as in \cite{FK06}. Unfortunately these more general twisted Alexander polynomials seem to have no interpretation in terms of Seiberg--Witten invariants of covers.\\

\noindent \textbf{Acknowledgment.} The first author would like to thank the organizers of the Postnikov memorial conference in Bedlewo, Poland for the relaxed atmosphere which allowed for many interesting conversations. We also would like to thank the referee for various helpful comments.

%==========================================================
\section{Preliminaries}

%==========================================================

%==========================================================
\subsection{Surfaces and principal $S^1$--bundles}\label{section:surface}
We first prove the following lemma regarding surfaces in $N$ and $M$.

\begin{lemma} \label{lem:lift}
Let $M\to N$ be a principal $S^1$--bundle with Euler class $e$.
Let $\s \in H^2(M;\Z)$ and $\S\subset N$ an embedded surface dual to $p_*(\s)$.
Then $\S$ lifts to a surface in $M$.
\end{lemma}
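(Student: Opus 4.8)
The plan is to obtain the lift $\ti{\S}$ as a section of the circle bundle restricted to $\S$, and to reduce the whole question to the vanishing of a single Euler class. Writing $i:\S\hookrightarrow N$ for the inclusion, the restricted bundle $p^{-1}(\S)\to \S$ is again a principal $S^1$--bundle, with Euler class $i^*e\in H^2(\S;\Z)$; a section of it is exactly an embedded surface $\ti{\S}\subset p^{-1}(\S)\subset M$ on which $p$ restricts to a diffeomorphism onto $\S$, i.e.\ a lift in the required sense. Since $\S$ has dimension $2$ and the fibre dimension is $1$, the only obstruction to building such a section cell by cell lives in $H^2(\S;\pi_1(S^1))=H^2(\S;\Z)$ and is precisely $i^*e$. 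So everything reduces to showing $i^*e=0$.

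To control $i^*e$ I would pair it against fundamental classes and feed in the hypothesis through the Gysin sequence. By assumption $\S$ is dual to $p_*(\s)$, so $PD[\S]=p_*(\s)\in H^1(N;\Z)$ and in particular $p_*(\s)$ lies in the image of $p_*:H^2(M;\Z)\to H^1(N;\Z)$. Exactness of the Gysin sequence (\ref{gysin}) at $H^1(N;\Z)$ identifies this image with $\ker\{\cup e:H^1(N;\Z)\to H^3(N;\Z)\}$, whence $p_*(\s)\cup e=0\in H^3(N;\Z)$. Evaluating on $[N]$ and using $p_*(\s)\cap[N]=[\S]$ gives
\[ \langle i^*e,[\S]\rangle=\langle e\cup PD[\S],[N]\rangle=0. \]
When $\S$ is connected, $H^2(\S;\Z)\cong\Z$ is detected by evaluation on $[\S]$, so this already forces $i^*e=0$ and produces the section.

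The step I expect to be the main obstacle is the disconnected case. The Gysin computation only controls the total pairing $\langle i^*e,[\S]\rangle$, whereas vanishing of $i^*e\in H^2(\S;\Z)=\bigoplus_j H^2(\S_j;\Z)$ requires $\langle e,[\S_j]\rangle=0$ on every component $\S_j$, which is not formally implied by the total statement. To address this I would first record, via the homology Gysin sequence (the bottom row of (\ref{gysin})), that $[\S]\cap e=0$ places $[\S]$ in $\im\{p_*:H_2(M;\Z)\to H_2(N;\Z)\}$, so that at least the homology class of $\S$ lifts. I would then try to promote this to a geometric, component-wise lift by writing $\S$ as a transverse level set $f^{-1}(\mathrm{pt})$ of a map $f:N\to S^1$ dual to $p_*(\s)$ and analysing how $e$ restricts to the pieces produced this way. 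Controlling these individual Euler numbers --- and hence guaranteeing an honest section over each $\S_j$ rather than merely a homological lift --- is the delicate point on which the argument turns; once $i^*e=0$ is secured, the sections over the components assemble into the lift $\ti{\S}$ and the proof is complete.
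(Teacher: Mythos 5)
Your argument is the same as the paper's: identify a lift of $\S$ with a section of the restricted bundle $p^{-1}(\S)\to\S$, reduce to the vanishing of its Euler class $i^*e\in H^2(\S;\Z)$, and deduce $p_*(\s)\cup e=0$ from exactness of the Gysin sequence (\ref{gysin}) at $H^1(N;\Z)$. For connected $\S$ this is complete and correct, and it is precisely the published proof, which says in one line that $e|_\S=0$ ``is equivalent to'' $p_*(\s)\cup e=0$.

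The disconnected case you flag is a genuine issue, and it is one the paper's proof elides rather than resolves: the Gysin computation only controls $\sum_j\langle e,[\S_j]\rangle$, while triviality of the bundle over $\S=\bigsqcup_j\S_j$ requires $\langle e,[\S_j]\rangle=0$ for every $j$. Moreover no refinement of the cohomological argument can close this gap, because the statement is literally false for a badly chosen disconnected representative: take $N=T^3$ with $e=PD(z)$ as in Section \ref{section:examples}, let $\s$ be any class with $p_*(\s)=0$, and let $\S$ be the union of two parallel, oppositely oriented copies of the $xy$--torus. This is an embedded surface dual to $p_*(\s)$, but the bundle restricted to either component has Euler number $\pm 1$ and admits no section, so $\S$ does not lift. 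What rescues the lemma in the places it is actually used is that one is free to choose the representative surface: in Lemma \ref{lem:propstar} the components of $\S$ are parallel copies of a single connected surface $\S'$ with $[\S']=\frac1n PD(p_*(\s))$ in $H_2(N;\Q)$, so each component separately pairs trivially with $e$. Your proposal to realize $\S$ as a level set of a map to $S^1$ is one way to manufacture such a representative, but it amounts to adding a hypothesis (or making a choice) not present in the statement. In short: same route as the paper, correct where the paper's proof is correct, and the ``delicate point'' you isolate is a defect of the lemma as stated rather than of your argument.
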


\begin{proof}
Clearly it is enough to show that the principal $S^1$--bundle over $N$ restricted to $\S$ is trivial. This in turn is equivalent to showing that
$e|_\S=0 \in H^2(\S;\Z)$. But since $\S$ is dual to $p_*(\s)$, this is equivalent to the condition that $p_*(\s) \cup e=0$,
which in turn is satisfied by the Gysin sequence (\ref{gysin}).
 \end{proof}

%
%
%Recall that we say that $\s\in H^2(M;\Z)$ has property $(*)$ if there exists an embedded  surface $\S \subset N$ and a (possibly disconnected)  closed curve $c\subset N$ in general position with the following properties:
%\bn
%\item $\S$ is a Thurston norm minimizing surface dual to $p_*(\s)$,
%\item given a lift $\ti{\S}$ of $\S$ to $M$ (which by Lemma \ref{lem:lift} always exists) the singular surface $p^{-1}(c)\cup \ti{\S}$ represents $PD(\s)$,
%\item the geometric intersection number of $\S$ and $c$ is given by the absolute value of the algebraic intersection number $\S\cdot c$.
%\en

We have the following  lemma (cf. also \cite{Kr99}).

\begin{lemma}  \label{lem:starmin}
Let $\s\in H^2(M;\Z)$ that satisfies $(*)$, then
\[ \chi^-_M(\s) \leq \chi^-_N(p_*(\s))+|\s\cdot \s|.\]
\end{lemma}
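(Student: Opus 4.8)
The plan is to construct an explicit embedded surface in $M$ dual to $\s$ whose complexity realizes the right-hand side, after which the bound follows from the definition of $\chi^-_M(\s)$ as a minimum over all such surfaces. Since $\s$ satisfies $(*)$, I would start from the data it supplies: a Thurston norm minimizing surface $\S\subset N$ dual to $p_*(\s)$, a lift $\ti{\S}\subset M$ (which exists by Lemma \ref{lem:lift}), and a closed curve $c\subset N$ in general position with $\S$ such that the singular surface $p^{-1}(c)\cup\ti{\S}$ represents $PD(\s)$ and such that the geometric intersection number of $\S$ and $c$ equals $|\S\cdot c|$.

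The first point is that the two pieces of this singular representative are cheap. Because $M\to N$ is a principal circle bundle, $p^{-1}(c)$ is a disjoint union of vertical tori, so $\chi^-(p^{-1}(c))=0$; and since $\S$ is norm minimizing we may take it without sphere components, so $\chi^-(\ti{\S})=\chi^-(\S)=\chi^-_N(p_*(\s))$. The only failure of $p^{-1}(c)\cup\ti{\S}$ to be embedded is along transverse double points, and these correspond bijectively to the points of $\S\cap c$, since the section $\ti{\S}$ meets each fiber $p^{-1}(x)$, $x\in\S\cap c$, in exactly one point. By condition $(*)(3)$ there are exactly $k:=|\S\cdot c|$ such points, and all of them carry the same sign.

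I would then resolve these double points by the oriented cut-and-paste, tubing the two sheets together; this preserves the homology class and yields an embedded surface $\widehat{\S}$ with $[\widehat{\S}]=PD(\s)$. Each resolution replaces two transverse disks ($\chi=2$) by an annulus ($\chi=0$), so $\chi(\widehat{\S})=\chi(\ti{\S})+\chi(p^{-1}(c))-2k=\chi(\S)-2k$; as every component of $\ti{\S}$ and of $p^{-1}(c)$ has non-positive Euler characteristic and tubing only lowers it, no sphere component is created and $\chi^-(\widehat{\S})=-\chi(\widehat{\S})=\chi^-_N(p_*(\s))+2k$. To finish I identify $2k$ with $|\s\cdot\s|$ by computing the self-intersection of $PD(\s)=[\ti{\S}]+[p^{-1}(c)]$: the class $[\ti{\S}]$ has trivial normal bundle (as $\S$ is two-sided and the bundle is trivial over $\S$), so $[\ti{\S}]^2=0$; the vertical tori may be pushed off, so $[p^{-1}(c)]^2=0$; and the mixed term $[\ti{\S}]\cdot[p^{-1}(c)]$ equals $\S\cdot c$, since appending the fiber direction matches the intersection signs in $M$ with those in $N$. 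Thus $\s\cdot\s=2\,\S\cdot c$, $|\s\cdot\s|=2k$, and $\chi^-_M(\s)\le\chi^-(\widehat{\S})=\chi^-_N(p_*(\s))+|\s\cdot\s|$.

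The main obstacle is the bookkeeping of signs: the estimate is only as good as claimed because $(*)(3)$ forces every double point to have the same sign, so that the number of oriented resolutions is exactly $k$, precisely matching the factor of two that appears independently in $\s\cdot\s=2\,\S\cdot c$. I expect the two delicate verifications to be the vanishing of the two self-intersection terms and the absence of sphere components after resolution; both are elementary but essential, as either failure would leave a gap between the complexity of $\widehat{\S}$ and the target bound.
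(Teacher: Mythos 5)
Your proof is correct and follows essentially the same route as the paper's: resolve each transverse double point of $p^{-1}(c)\cup\ti{\S}$ by replacing a pair of transverse disks with an annulus, note that $p^{-1}(c)$ is a union of tori of zero complexity, and use $\s\cdot\s=2\,\S\cdot c$ together with condition $(*)(3)$ to match the $2|\S\cdot c|$ cost of the resolutions with $|\s\cdot\s|$. The paper states the key identities without proof, whereas you verify them (vanishing of the two self-intersection terms, the sign coherence of the double points, absence of sphere components), but the argument is the same.
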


\begin{proof}
Let $\S,\ti{\S}$ and $c$ as in the definition of property ($*$).
 At each singular point  of $p^{-1}(c)\cup \ti{\S}$ we can replace a pair of transverse disks with an embedded annulus having the same oriented boundary. Note that each replacement increases the Euler number by 2. We therefore obtain a smooth surface $T$ representing the class dual to $\s$ with
\[ \chi^-(T)=\chi^-(\ti{\S})+\chi^-(p^{-1}(c))+2 |\S \cdot c|.\]
Note that $p^{-1}(c)$ is a union of tori, hence $\chi^-(p^{-1}(c))=0$. As
$2~\S\cdot c=\s\cdot \s$, we get
\[ \chi^-(T)=\chi^-_N(p_*(\s))+|\s \cdot \s|.\]
\end{proof}

We have the following criterion for $\s\in H^2(M;\Z)$ having property $(*)$.

\begin{lemma} \label{lem:propstar}
Let $\s\in H^2(M;\Z)$ and let $n=\div(p_*(\s))$ be the divisibility of $p_*(\s)\in H^1(N;\Z)$. If there exists a connected Thurston norm minimizing surface dual to the primitive class $\frac{1}{n}p_*(\s)$, then $\s$ has property $(*)$.
\end{lemma}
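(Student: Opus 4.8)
The plan is to exhibit the surface $\S$ and the curve $c$ demanded by property $(*)$ explicitly. Write $\phi=p_*(\s)$, $n=\div(\phi)$ and $\phi_0=\frac1n\phi$, and let $\S_0$ be the connected Thurston norm minimizing surface dual to the primitive class $\phi_0$ provided by the hypothesis. For the surface I would take $\S$ to be the disjoint union of $n$ parallel push--offs of $\S_0$ (using that $\S_0$ is two--sided). Then $\S$ is embedded and dual to $n\phi_0=\phi$, and since the Thurston norm is homogeneous, $\chi^-(\S)=n\,\chi^-(\S_0)=n\,\chi^-_N(\phi_0)=\chi^-_N(\phi)$, so $\S$ is norm minimizing. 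This settles condition (1).

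Next I would pin down the homology class of $c$ using the Gysin sequence (\ref{gysin}). By Lemma \ref{lem:lift} the surface $\S$ lifts to $\ti\S\subset M$, and as the lift is a section over $\S$ we have $p_*[\ti\S]=[\S]\in H_2(N)$. On the other hand, commutativity of (\ref{gysin}) gives $p_*(PD(\s))=PD(p_*(\s))=[\S]$, so $PD(\s)-[\ti\S]$ lies in $\ker(p_*\colon H_2(M)\to H_2(N))$. By exactness this kernel is the image of the transfer map $H_1(N)\to H_2(M)$, which sends a class $[c_0]$ to $[p^{-1}(c_0)]$. Hence there is a class $[c_0]\in H_1(N)$ with $[p^{-1}(c_0)]=PD(\s)-[\ti\S]$, i.e. $PD(\s)=[\ti\S\cup p^{-1}(c_0)]$. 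Since condition (2) depends only on the homology class $[c_0]$, it will hold for \emph{any} representative $c$ of $[c_0]$; note also $\S\cdot c=\langle\phi,[c_0]\rangle$ (and one checks $\s\cdot\s=2\,\S\cdot c$, consistent with Lemma \ref{lem:starmin}, though this is not needed for $(*)$).

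The heart of the argument, and the step I expect to be the main obstacle, is choosing the representative $c$ of $[c_0]$ so that its geometric intersection with $\S$ equals $|\S\cdot c|$ (condition (3)). Because $\S$ consists of $n$ parallel copies of $\S_0$, it suffices to represent $[c_0]$ by a curve meeting $\S_0$ in exactly $|k|$ points of the same sign, where $k=\langle\phi_0,[c_0]\rangle$; such a curve meets each parallel copy in the same way, giving geometric intersection $n|k|=|\S\cdot c|$. To produce it I would use the long exact sequence of the pair $(N,N\setminus\S_0)$. Excision together with the Thom isomorphism for the trivial normal bundle of $\S_0$ identifies $H_1(N,N\setminus\S_0)\cong H_0(\S_0)=\Z$ (here the connectedness of $\S_0$ is essential), and under this identification the map $H_1(N)\to H_1(N,N\setminus\S_0)$ is exactly $\phi_0$. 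Exactness then yields $\ker\phi_0=\im(H_1(N\setminus\S_0)\to H_1(N))$, so every class pairing trivially with $\phi_0$ is represented by a cycle disjoint from $\S_0$.

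Finally I would assemble the representative. Since $\phi_0\neq0$ the class $[\S_0]=PD(\phi_0)$ is nonzero, so $\S_0$ is non--separating and $N\setminus\S_0$ is connected; a path joining the two sides of $\S_0$ closes up to a loop $\gamma$ meeting $\S_0$ once, positively. Writing $[c_0]=([c_0]-k[\gamma])+k[\gamma]$ and representing the first summand by a cycle disjoint from $\S_0$ (possible by the previous paragraph, as it lies in $\ker\phi_0$), I obtain a representative $c$ of $[c_0]$ meeting $\S_0$ in exactly $|k|$ points, all of the same sign. This verifies condition (3), and together with the preceding paragraphs establishes property $(*)$ for $\s$.
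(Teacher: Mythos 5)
Your proof is correct and follows essentially the same route as the paper's: take $n$ parallel copies of the connected norm--minimizing surface, use the Gysin sequence to produce the homology class of $c$, and use connectedness of the surface to realize the geometric intersection number as $|\S\cdot c|$. The only difference is that you spell out the final step (via the exact sequence of the pair $(N,N\sm \S_0)$ and the decomposition $[c_0]=([c_0]-k[\gamma])+k[\gamma]$), which the paper simply asserts as a consequence of connectedness.
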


\begin{proof} Let $\s\in H^2(M;\Z)$ and write $n=\div(p_*(\s))$. Assume $\frac{1}{n}p_*(\s)$ is dual to a connected  Thurston norm minimizing  surface $\S'$.
By \cite{Th86} the union of $n$ parallel copies of $\S'$ is then a Thurston norm minimizing  surface dual to $p_*(\s)$.
By Lemma \ref{lem:lift} we can lift $\S'$ to a surface $\ti{\S}'\subset M$.
It follows from the Gysin sequence (\ref{gysin}) that we can find an embedded curve $c\in N$ such that the class dual to
$\s$ is given by $[n\ti{\S}']+[p^{-1}(c)]$. We can assume that $c$ is in general position with $n\S'$, hence $n\ti{\S}'$ and $p^{-1}(c)$ are in general position.
Since $\S'$ is connected we can  choose $c$ such that the geometric intersection number of $\S'$ and $c$ is given by the absolute value of $\S'\cdot c$.
(Note that $c$ could be disconnected.)
\end{proof}

The following  lemma  follows easily from \cite[Proposition~6.1]{McM02} or \cite{Tu02}.

\begin{lemma} Let $\phi \in H^1(N;\Z)$ and $n=\div(\phi)$. If $\Delta_{N,\phi}\ne 0$, then there exists a connected Thurston norm minimizing surface dual to the primitive class $\frac{1}{n}\phi$.
\end{lemma}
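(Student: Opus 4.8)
The plan is to reduce to the case of the primitive class and then analyze the ``dual graph'' of a Thurston norm minimizing surface. First I would record that $\Delta_{N,\phi}\neq 0$ if and only if $\Delta_{N,\psi}\neq 0$, where $\psi=\tfrac{1}{n}\phi$ is the primitive class: since $\phi=n\psi$ and $\Z$ is torsion--free, $\phi$ and $\psi$ have the same kernel, so both polynomials are computed from the single $\zt$--module $H_1(\tilde N;\Z)$, where $\tilde N\to N$ is the (connected) infinite cyclic cover associated to $\psi$; in either convention non--vanishing of the Alexander polynomial is equivalent to $H_1(\tilde N;\Z)$ being a torsion $\zt$--module. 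Thus it suffices to produce a connected norm minimizing surface dual to $\psi$ under the hypothesis that $H_1(\tilde N;\Z)$ is $\zt$--torsion.

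Next I would fix any Thurston norm minimizing surface $\S$ dual to $\psi$ and form its dual graph $\mathcal G$: the vertices are the components of the cut--open manifold $N|\S$ and the edges are the components of $\S$, each edge joining the two pieces adjacent to that component. Cutting $N$ along $\S$ realizes $\tilde N$ as an infinite union $\bigcup_{i\in\Z}Y_i$ of copies of $N|\S$ glued along lifts of $\S$, and the associated Mayer--Vietoris sequence of $\zt$--modules yields a surjection from $H_1(\tilde N;\Z)$ onto the kernel of the signed, $t$--weighted incidence map $\partial\colon \zt^{\,c(\S)}\to\zt^{\,c(N|\S)}$ of $\mathcal G$. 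Since $N$ is connected this incidence map has rank $c(N|\S)-1$ over the fraction field, so its kernel is free over $\zt$ of rank $b_1(\mathcal G)$. As $H_1(\tilde N;\Z)$ is torsion it cannot surject onto a nonzero free module, whence $b_1(\mathcal G)=0$ and $\mathcal G$ is a tree.

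The conclusion then follows by pruning leaves. If $\S$ is disconnected then $\mathcal G$ has at least two vertices, hence a leaf edge $\S_i$ whose far vertex $Y_b$ is a piece of $N|\S$ bounded by the single surface $\S_i$; thus $[\S_i]=[\partial Y_b]=0\in H_2(N;\Z)$. Deleting this component yields $\S'=\S\sm\S_i$, still dual to $\psi$, with $\chi^-(\S')=\chi^-(\S)-\chi^-(\S_i)\leq \chi^-(\S)$; minimality of $\S$ forces $\chi^-(\S_i)=0$, so $\S'$ is again norm minimizing, and its dual graph is $\mathcal G$ with the leaf pruned, hence again a tree. Iterating removes one edge at a time until a single edge remains, i.e. a connected norm minimizing surface dual to $\psi$. (Alternatively, one can simply quote \cite[Proposition~6.1]{McM02} or \cite{Tu02}.)

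I expect the main obstacle to be the bookkeeping in the Mayer--Vietoris step: one must decompose $\tilde N$ equivariantly, identify $H_0$ of the lifted cut surface and of $N|\S$ with the edge-- and vertex--modules $\zt^{\,c(\S)}$ and $\zt^{\,c(N|\S)}$, and verify that the connecting map is exactly the incidence map of $\mathcal G$ with rank $c(N|\S)-1$ (using that the monomial $t$--weights do not drop the rank of a connected graph's incidence matrix). Once this dictionary is in place, the tree dichotomy and the leaf--pruning are routine.
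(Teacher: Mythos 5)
The paper itself offers no argument for this lemma---it is simply attributed to \cite[Proposition~6.1]{McM02} and \cite{Tu02}---so your proposal has to stand on its own, and it contains a genuine error at its central step. Your reduction to the primitive class $\psi$ and the Mayer--Vietoris set-up are fine, but the rank computation is wrong. The sequence continues $H_0(A\cap B)\xrightarrow{\ \partial\ }H_0(A)\oplus H_0(B)\to H_0(\ti{N})\to 0$, and since $\ti{N}$ is connected, $\mbox{coker}\,\partial\cong\Z\cong\zt/(t-1)$ is a \emph{torsion} $\zt$--module. Hence the $t$--weighted incidence map has full rank $c(N|\S)$ over $\Q(t)$, not $c(N|\S)-1$: the monomial weights \emph{raise} the rank above that of the ordinary incidence matrix, which is what you recover only after setting $t=1$. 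Consequently $\ker\partial$ has rank $E-V=b_1(\mathcal{G})-1$, and the torsion hypothesis on $H_1(\ti{N};\Z)$ forces $b_1(\mathcal{G})=1$: the dual graph has exactly one cycle, and it is \emph{never} a tree. Indeed, a tree would mean every component of $\S$ is separating, hence null-homologous, giving $\psi=0$; already for a fibered $(N,\psi)$ with connected fiber the dual graph is a single vertex with a loop.

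This error propagates into the endgame. ``Pruning until a single edge remains'' would terminate at a one-edge tree, i.e.\ a separating, null-homologous surface, which cannot be dual to a primitive class; and the assertion that a disconnected $\S$ forces at least two vertices is also false (two non-separating components can give one vertex with two loops). What the corrected argument actually yields is: after pruning the bridge edges---these are precisely the components to which your $[\S_i]=[\partial Y_b]=0$ observation applies---the dual graph is a single cycle of some length $k$, and one must still prove $k=1$. That step is missing and is where primitivity must enter a second time: the image of $\psi$ on $H_1(N;\Z)$ lies in $(\sum_i\epsilon_i)\Z$, where $\epsilon_i=\pm1$ records the coorientations of the $k$ components around the cycle, so $\sum_i\epsilon_i=\pm1$; if $k\geq2$ the signs cannot all agree, hence some piece of $N|\S$ is cobounded by two components with cancelling homology classes, and deleting that pair produces a norm-minimizing surface dual to $\psi$ whose dual graph is a tree or empty---contradicting the first paragraph. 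As written, your proof establishes a false intermediate claim and omits the one place (beyond the initial reduction) where primitivity of $\psi$ is essential; the pruning idea is salvageable, but only after both corrections.
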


\begin{remark}
We now give examples of manifolds of the form $M=S^1\times N$ such that
not every $\s \in H^2(M;\Z)$ has property $(*)$. For example, let $N$ be the connected sum of the zero framed surgeries on two non--trivial oriented knots $K_1$ and $K_2$. Let $\mu_1,\mu_2\in H^1(N;\Z)$ be a basis given by the meridians of $K_1$ and $K_2$. Furthermore let $F_1,F_2$ be the result of capping off  two minimal genus Seifert surfaces of $K_1$ and $K_2$.
Consider $\phi=PD(a_1[F_1])+PD(a_2[F_2]), a_1,a_2\in \Z$, then for $a_1\ne 0$ and $a_2\ne 0$ there exists no connected Thurston norm minimizing surface dual to $\phi$.

We now specialize to $a_1=3$ and $a_2=2$.
Also, let  $\g=\mu_1-\mu_2$. Then $\g\cdot \phi=1$. But it is easy to see that there exists no Thurston norm minimizing surface $\S$ dual to $\phi$ and a curve $c$ representing $\g$ such that the geometric intersection number of $\S$ and $c$ equals $1$. Now let $\s\in H^2(S^1\times N;\Z)$ be the element which corresponds to
$PD(\g)+\phi \otimes 1$ under the K\"unneth decomposition $H^2(S^1\times N;\Z)=H^2(N;\Z)\oplus H^1(N;\Z)\otimes H^1(S^1;\Z)$.
Then it is clear that $\s$ does not have property ($*$).

Finally, let  $\g=\mu_1+\mu_2$. Then $\g\cdot \phi=5$ and we can find a  Thurston norm minimizing surface $\S$ dual to $\phi$ and a curve $c$ representing $\g$ such that the geometric intersection number of $\S$ and $c$ equals $5$. The corresponding element $\s\in H^2(S^1\times N;\Z)$ has property ($*$), but does not satisfy  the conditions of Lemma \ref{lem:propstar}.

These examples show that in general it is not clear when ($*$) is satisfied. Also, there does not seem to be a good conjecture for what  $\chi_M^-(\s)$ should be in the case that ($*$) does not hold.
\end{remark}

%==========================================================================
\subsection{Twisted Alexander polynomials} \label{section:twialex}

In this section we are going to recall the definition of the (twisted) Alexander polynomial associated to an epimorphism of the fundamental group of a compact $3$--manifold onto a finite group. Twisted Alexander polynomials were first
introduced for the case of knots by Xiao--Song Lin \cite{Li01} and Lin's definition was later generalized to 3--manifolds by Wada \cite{Wa94}, Kirk--Livingston \cite{KL99} and Cha \cite{Ch03}.

Let $N$ be a compact $3$--manifold,  $\phi \in H^{1}(N;\Z)=\hom(\pi_1(N),\Z)$ and let $\a:\pi_{1}(N) \to G$ be an
epimorphism onto a finite group $G$. Then $\a\times \phi$  gives an action of $\pi_{1}(N)$ on
$G \times \Z$, which extends to a ring homomorphism from
$\Z[\pi_{1}(N)]$ to the  $\Z[t^{\pm 1}]$--linear endomorphisms of
$\Z[G \times \Z] = \Z[G]\tpm$. This  induces a  left
$\Z[\pi_{1}(N)]$--structure
 on $\Z[G]\tpm$.

Now let $\ti{N}$ be the universal cover of $N$. Note that
$\pi_{1}(N)$ acts on the left on $\ti{N}$ as group of deck
transformation. The chain groups $C_*(\ti{N})$ are in a natural way
right $\Z[\pi_1(N)]$--modules, with the right action on
$C_{*}(\ti{N})$ defined via $\sigma \cdot g := g^{-1}\sigma$, for
$\sigma \in C_{*}(\ti{N})$. We can form by tensor product the chain
complex $C_*(\ti{N})\otimes_{\Z[\pi_1(N)]}\Z[G]\tpm$. Now define
$H_{i}(N;\Z[G]\tpm):=
H_i(C_*(\ti{N})\otimes_{\Z[\pi_1(N)]}\Z[G]\tpm)$, which inherit the
structure of $\Z\tpm$--modules. These module are called
twisted Alexander modules.

Our goal is to define an invariant out of $H_{1}(N;\Z[G]\tpm)$.
First note that by endowing $N$ with a finite cell structure we can
view
 $C_*(\ti{N})\otimes_{\Z[\pi_1(N)]}\Z[G]\tpm$
as finitely generated $\Z\tpm$--modules. The $\Z\tpm$--module
$H_1(N;\Z[G]\tpm)$ is now a finitely presented and finitely related
$\Z\tpm$--module since $\Z\tpm$ is Noetherian. Therefore
$H_1(N;\Z[G]\tpm)$ has a free $\Z\tpm$--resolution
\[ \Z\tpm^r \xrightarrow{S} \Z\tpm^s \to H_1(N;\Z[G]\tpm) \to 0 \]
of finite $\Z\tpm$--modules. Without loss of generality we can
assume that $r\geq s$.

\begin{definition} \label{def:alex} The \emph{twisted Alexander polynomial} of $(N,\a,\phi)$ is defined
to be the order of the $\Z\tpm$--module $H_1(N;\Z[G]\tpm)$, i.e. the
greatest common divisor of the $s\times s$ minors of the $s\times
r$--matrix $S$. It is denoted by $\Delta_{N,\phi}^{\a}\in \Z\tpm$,
and it is well--defined up to units of $\Z\tpm$.
\end{definition}

%It is well--known (see e.g. \cite{FV08a}) that, up to sign, there is
%a unique choice of $\Delta_{N,\phi}^{\a} \in \Z\tpm$ symmetric under
%the natural involution of $\Z\tpm$.

If $G$ is the trivial group
we will drop $\a$ from the notation. With these conventions,
$\Delta_{N,\phi} \in \Z[t^{\pm1}]$ is the ordinary $1$--variable
Alexander polynomial associated to $\phi$. For example,
if $X(K)=S^3\sm \nu K$ is the exterior of a knot $K$  and $\phi\in H^1(X(K);\Z)$ is a generator, then  $\Delta_{X(K),\phi}$ equals the ordinary Alexander polynomial $\Delta_K$ of a knot.

Finally, given a 3--manifold $N$ we write $H=H_1(N;\Z)/\mbox{torsion}$. Using a similar approach as above one can define the multivariable Alexander polynomial $\Delta_N\in \Z[H]$. The following theorem of Meng and Taubes \cite{MT96} states that the multivariable Alexander polynomial of a 3--manifold $N$ corresponds to the Seiberg--Witten invariants of $N$.

\begin{theorem} \label{thm:mt}
Let $N$ be a closed 3--manifold with $b_1(N) > 1$ and let $H=H_1(N;\Z)/\mbox{torsion}$.
Then
\[ \Delta_N = \pm \sum_{\xi \in H^{2}(N)} SW_{N}(\xi) \cdot \frac12 f(\xi) \in \Z[H],
\]where $f$ denotes the composition of Poincar\'e duality with the quotient map $f: H^2(N) \cong
H_{1}(N) \to H$ and, as $f(\xi)$ has even divisibility for all $3$--dimensional basic classes $\xi
\in \mbox{supp\,} SW_{N}$, multiplication by $\frac12$ is well-defined.
\end{theorem}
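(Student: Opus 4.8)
The plan is to split the equality into an analytic step identifying the Seiberg--Witten count $SW_N$ with a topological torsion invariant, and an algebraic step identifying that torsion with the multivariable Alexander polynomial $\Delta_N$. Throughout I would index $\mathrm{Spin}^c$ structures $\mathfrak{s}$ on $N$ by their Chern class $\xi = c_1(\mathfrak{s}) \in H^2(N;\Z)$. Since a closed oriented $3$--manifold is spin, $c_1(\mathfrak{s})$ is always divisible by $2$; this is exactly the even-divisibility assertion, and it is what makes $\tfrac12 f(\xi) \in H$, and hence the group-ring element $SW_N(\xi) \cdot \tfrac12 f(\xi) \in \Z[H]$, well defined.

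First I would set up the $3$--dimensional invariant: for $b_1(N) > 1$ a generic metric and perturbation make the moduli space of Seiberg--Witten solutions for each $\mathfrak{s}$ a compact, oriented, zero--dimensional manifold, whose signed count $SW_N(\xi)$ is independent of the choices. The hypothesis $b_1 > 1$ is essential here, since it pushes the reducible locus to high enough codimension that no metric dependence or wall--crossing enters, so that $SW_N$ is a genuine diffeomorphism invariant with finite support; this finiteness is what makes the right--hand side a well-defined element of $\Z[H]$.

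The heart of the argument, and the step I expect to be the main obstacle, is the Meng--Taubes identification $SW_N = \pm\,\tau(N)$ of this count with the Milnor--Turaev torsion. I would prove it by showing that both sides satisfy the same surgery law and agree on a generating family of manifolds. On the topological side the torsion obeys a Mayer--Vietoris / Torres--type formula under gluing along tori; on the analytic side one derives the matching gluing formula for $SW_N$, for instance by passing to the $4$--manifold $S^1\times N$ through the product relation $SW_{S^1\times N}(p^*\mathfrak{s}) = SW_N(\mathfrak{s})$ and analyzing the behaviour of Seiberg--Witten invariants under cutting along incompressible tori. Because every $N$ with $b_1 > 1$ is obtained by surgery from a standard model on which both functions are computed by hand, an induction on a surgery presentation would force them to coincide up to the global sign. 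The delicate points are compactness and the orientation conventions (which pin down the sign), and, above all, matching the analytic gluing coefficients with the algebraic ones.

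Finally I would invoke the classical theorem of Milnor and Turaev that for $b_1(N) > 1$ the Milnor--Turaev torsion equals the multivariable Alexander polynomial $\Delta_N \in \Z[H]$, with no $(t-1)$--type denominator (the correction factor that occurs when $b_1 = 1$ is absent precisely because $H$ has rank at least $2$). Combining this with the previous step, and rewriting the $\mathrm{Spin}^c$ indexing $\xi = c_1(\mathfrak{s})$ in terms of the exponents $\tfrac12 f(\xi)$, yields $\Delta_N = \pm \sum_{\xi} SW_N(\xi) \cdot \tfrac12 f(\xi)$, the equality holding up to units of $\Z[H]$, i.e. up to the sign and the choice of basepoint in the $\mathrm{Spin}^c$ torsor.
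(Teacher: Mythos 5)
The paper does not prove this statement: Theorem \ref{thm:mt} is quoted verbatim as a result of Meng and Taubes \cite{MT96} (with the identification of Milnor--Turaev torsion and $\Delta_N$ for $b_1>1$ due to Milnor and Turaev), and no argument for it appears anywhere in the text. So there is no internal proof to compare yours against; what can be assessed is whether your sketch would actually establish the theorem, and as written it would not. Your outline correctly reproduces the known strategy --- identify $SW_N$ with the torsion by matching surgery/gluing laws, then invoke torsion $=\Delta_N$ --- but the step you yourself flag as ``the main obstacle'' is the entire content of the theorem, and you only name it. Establishing the gluing formula for the three--dimensional Seiberg--Witten count under Dehn surgery, with the correct coefficients, orientations and compactness, is a substantial analytic theorem, not a step one can defer. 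Moreover the induction on a surgery presentation does not close as stated: starting from a link in $S^3$, the intermediate manifolds in any surgery presentation generically have $b_1\le 1$, where $SW_N$ is metric--dependent and wall--crossing corrections (ultimately the Casson--Walker invariant) enter; handling these cases is unavoidable and is precisely where much of the technical difficulty of the Meng--Taubes program lies, yet your hypothesis $b_1>1$ is invoked to exclude them.

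Two smaller points. The assertion that $c_1(\mathfrak{s})$ ``is always divisible by $2$'' in $H^2(N;\Z)$ is not correct in the presence of $2$--torsion; what is true is that $c_1(\mathfrak{s})$ reduces to $w_2(N)=0$ mod $2$, so its image $f(\xi)$ in the free group $H$ is divisible by $2$, which is exactly what the theorem asserts and all that is needed. Finally, the equality in the theorem is stated up to an overall sign only, not up to arbitrary units of $\Z[H]$ as you say at the end: the symmetry $SW_N(\xi)=SW_N(-\xi)$ means the right--hand side is already symmetric about $0\in H$, and pinning the answer down to $\pm$ (rather than $\pm h$ for $h\in H$) is part of the content being claimed.
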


%\begin{remark} The $1$-variable twisted Alexander polynomial defined
%above can also be described as the  specialization of a
%multivariable twisted Alexander polynomial taking values in $\Z[H]$,
%where $H$ is the maximal free quotient  of $H_1(N)$. This polynomial, in turn, is
%related with the ordinary Alexander polynomial of the $G$--cover
%$N_{G}$ of $N$ and then, thanks to \cite{MT96}, to the
%Seiberg-Witten invariants of $S^1 \times N_G$. These observations
%constitute the starting point of the connection between Conjecture
%\ref{conjcha} and Conjecture \ref{conjfolk}. See \cite{FV08a} for
%details. \end{remark}

%==========================================================
\section{Constraints from Seiberg-Witten theory} \label{sw}

%===============================
\subsection{Seiberg-Witten theory for  manifolds with circle action} \label{ssw} \label{section:sw}
The essential ingredient  in our approach is the fact that the Seiberg-Witten invariants of $M$ are
related to the Alexander polynomial of $N$. The following theorem combines the results of Meng--Taubes \cite{MT96} and Baldridge  \cite[Corollaries~25~and~27]{Ba03} (cf. also \cite{Ba01}), to which we refer the reader for definitions and results for Seiberg-Witten theory in this set-up:

\begin{theorem} \label{bald} \label{thm:baldridge}  Let $M$ be a $4$--manifold with $b_2^+(M) > 1$ admitting a free circle action with  orbit
space $N$. Let $e\in H^2(N;\Z)$ be the  Euler class. Assume that either $e=0$ or $e$ non--torsion. Then the Seiberg-Witten invariant $SW_{M}(\kappa)$ of a class $\kappa = p^{*} \xi \in p^*
H^{2}(N;\Z) \subset H^{2}(M;\Z)$ is given by the formula \begin{equation} \label{baldfor} SW_{M}(\kappa)
=\sum_{\xi \in (p^*)^{-1}(\kappa)} SW_N(\xi)= \sum_{\xi' - \xi \, \equiv \, 0 \, (e)} SW_{N}(\xi') \in \Z. \end{equation}
Furthermore, $SW_M(\kappa)=0$ for any $\kappa \not\in p^* H^2(N;\Z)$.
\end{theorem}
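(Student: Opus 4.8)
The plan is to deduce the statement by assembling the two cited computations of Seiberg--Witten invariants for four--manifolds carrying a free circle action, organizing the bookkeeping through the Gysin sequence (\ref{gysin}). The structural observation underlying everything is that, because the $S^1$--action on $M$ is free, the four--dimensional Seiberg--Witten theory on $M$ admits a dimensional reduction to the three--dimensional Seiberg--Witten theory on the orbit space $N$; the content of the theorem is to make the resulting correspondence of invariants explicit, in both regimes $e=0$ and $e$ non--torsion. Note first that the hypothesis $b_2^+(M)>1$ is the input needed for both halves: by (\ref{betti}) and $\sign(M)=0$ it forces $b_1(N)>1$, which is exactly the hypothesis of the Meng--Taubes Theorem \ref{thm:mt}, and it guarantees that the four--dimensional invariants are well defined and perturbation--independent.

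First I would dispose of the vanishing assertion. For $\kappa\not\in p^*H^2(N;\Z)$ the corresponding spin$^c$ structure on $M$ is not pulled back from $N$, hence is not preserved by the circle action; Baldridge's analysis shows that such non--invariant classes carry no contribution, so $SW_M(\kappa)=0$. This reduces the problem to classes of the form $\kappa=p^*\xi$.

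Next I would identify the index set of the summation. Since $SW_M$ is a function on $H^2(M;\Z)$, the class $\kappa=p^*\xi$ receives contributions from every three--dimensional class $\xi'$ with $p^*\xi'=\kappa$, that is, from the full fibre $(p^*)^{-1}(\kappa)$. By exactness of the top row of (\ref{gysin}) at $H^2(N;\Z)$ we have $\ker\{p^*:H^2(N;\Z)\to H^2(M;\Z)\}=\im\{\cup\,e:H^0(N;\Z)\to H^2(N;\Z)\}=\Z\cdot e$, so $(p^*)^{-1}(\kappa)$ is exactly the coset $\{\xi'\,:\,\xi'-\xi\equiv 0\ (e)\}$ appearing in the formula, and the sum is finite because $SW_N$ has finite support. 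With this identification in place, the formula is Baldridge's \cite[Corollaries~25~and~27]{Ba03} when $e$ is non--torsion; when $e=0$ the coset is a single point (as $p^*$ is then injective), and $SW_M(p^*\xi)=SW_N(\xi)$ is the standard dimensional reduction on the product $S^1\times N$ underlying the Meng--Taubes computation \cite{MT96}.

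The main obstacle I anticipate is not conceptual but one of bookkeeping and conventions: Baldridge's corollaries are phrased in terms of three--dimensional spin$^c$ structures on $N$, and one must verify that his grouping of contributions coincides with the grouping dictated by the $H^2(M;\Z)$--class $\kappa$, i.e. that it is precisely the coset $\ker p^*=\Z\cdot e$ identified above, and that the individual terms $SW_N(\xi')$ enter with consistent orientations so that the right--hand side is an honest integer rather than a sum defined only up to sign. Once the indexing and sign conventions of the three-- and four--dimensional invariants are reconciled, the two cases assemble directly into the displayed formula.
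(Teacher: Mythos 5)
Your proposal is correct and follows essentially the same route as the paper, which offers no independent proof of this theorem but simply cites Meng--Taubes and Baldridge \cite[Corollaries~25~and~27]{Ba03} and remarks that the effect of the twisting is to wrap up the contributions of the $3$--dimensional basic classes differing by a multiple of $e$. Your identification of the fibre $(p^*)^{-1}(\kappa)$ as the coset of $\ker p^* = \Z\cdot e$ via the Gysin sequence (\ref{gysin}) is exactly the bookkeeping implicit in the paper's statement.
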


%In the remainder of the paper, we will implicitly assume the geometrization conjecture. Without that
%assumption, the condition that $N$ is irreducible must be added to the hypotheses of (some of) our results.

In the formula above, $SW_{N}(\xi)$ is the $3$--dimensional SW--invariant of a class $\xi \in
H^{2}(N)$, and the effect of the twisting of the $S^1$--fibration, measured by the class $e \in
H^2(N)$, is to wrap up the contribution of all $3$--dimensional basic classes of $N$ that have the
same image in $H^2(M)$, i.e. that differ by a multiple of $e$. As usual, we can package the above
invariants in terms of a Seiberg-Witten polynomial.

From the calculation of the Seiberg--Witten invariants of $M$ we obtain immediately the following corollary.

\begin{corollary}
Let $M$  be a $4$--manifold with $b_2^+(M) > 1$ admitting a free circle action with  orbit
space $N$ such that the  Euler class is  either zero or non--torsion. Then
 \be \label{equ:adj2} \chi^-_M(\s) \geq |\s\cdot \s| +\s \cdot \k \ee
for any Seiberg-Witten basic class $\k \in H^2(M;\Z)$.
\end{corollary}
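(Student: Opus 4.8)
The only discrepancy between the desired inequality (\ref{equ:adj2}) and the adjunction inequality (\ref{equ:adj}) is the absolute value on the self--intersection term, and since $|\s\cdot\s|=\s\cdot\s$ whenever $\s\cdot\s\ge 0$ the content lies entirely in the case $\s\cdot\s<0$. The plan is to reduce this case to the standard one by reversing the orientation of $M$: writing $\ol{M}$ for $M$ with the opposite orientation, the intersection form of $\ol{M}$ is the negative of that of $M$, so the self--intersection of $\s$ computed in $\ol{M}$ equals $-\,\s\cdot\s$, which is now non--negative. If I can apply (\ref{equ:adj}) to $\ol{M}$ and match up the data, then taking the better of the two bounds will produce $|\s\cdot\s|$.

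For this reduction to be legitimate I must check that $\ol{M}$ again falls within the scope of (\ref{equ:adj}) and of Theorem~\ref{thm:baldridge}, and that its Seiberg--Witten basic classes coincide with those of $M$. First, because $\sign(M)=0$ we have $b_2^+(\ol M)=b_2^-(M)=b_2^+(M)>1$, so the hypothesis on the second Betti numbers persists. Next, $\ol M$ is the underlying smooth manifold of $M$ equipped with the conjugate free $S^1$--action (equivalently, the reversed fibre orientation); it is therefore again a principal $S^1$--bundle over the \emph{same} oriented orbit space $N$, now with Euler class $-e$, which is zero or non--torsion exactly when $e$ is. Since Baldridge's formula (\ref{baldfor}) expresses $SW$ purely in terms of the three--dimensional invariants $SW_N$ and of congruences modulo $e$---and these are unchanged when $e$ is replaced by $-e$---Theorem~\ref{thm:baldridge} gives $SW_{\ol M}(p^*\xi)=SW_M(p^*\xi)$ for every $\xi$. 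Hence $M$ and $\ol M$ have exactly the same basic classes; in particular the given $\k$ is basic for $\ol M$ as well.

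It remains to assemble the bounds. Since the complexity of an embedded surface depends on neither the orientation of the surface nor that of the ambient manifold, $\chi^-_{\ol M}(\s)=\chi^-_M(\s)$. Applying (\ref{equ:adj}) to the basic class $\k$ (and to $-\k$, which is basic by the charge--conjugation symmetry of $SW$, so as to sharpen $\s\cdot\k$ to $|\s\cdot\k|$) once in $M$ and once in $\ol M$ gives
\[ \chi^-_M(\s)\ \ge\ \s\cdot\s+|\s\cdot\k|\qquad\text{and}\qquad \chi^-_M(\s)=\chi^-_{\ol M}(\s)\ \ge\ -\,\s\cdot\s+|\s\cdot\k|. \]
Taking the larger of the two right--hand sides yields $\chi^-_M(\s)\ge |\s\cdot\s|+|\s\cdot\k|\ge |\s\cdot\s|+\s\cdot\k$, as claimed.

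The one genuinely non--formal step---and the place I expect to have to argue most carefully---is the identification of the basic classes of $\ol M$ with those of $M$. Orientation reversal alters Seiberg--Witten theory in general, so this is not automatic; it holds here precisely because Baldridge's computation reduces everything to the orientation--fixed three--manifold $N$ and is insensitive to the sign of the Euler class. This is exactly the point at which the circle--bundle structure, together with the vanishing of the signature (which guarantees $b_2^+(\ol M)>1$), is indispensable.
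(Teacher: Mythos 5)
Your proof is correct and follows essentially the same route as the paper: reduce to the case $\s\cdot\s<0$, pass to the orientation--reversed manifold (where Theorem \ref{thm:baldridge} still applies and yields the same basic classes), and apply the adjunction inequality there to the basic class $-\k$. Your extra verifications (that $b_2^+$ is preserved because $\sign(M)=0$, and that the Euler class hypothesis survives orientation reversal) are points the paper leaves implicit, but the argument is the same.
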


\begin{proof}
By (\ref{equ:adj}) we only have to consider the case that $\s\cdot \s<0$. Let $\k$ be a basic class of $M$. Recall that this implies that $-\k$ is also a basic class. Let $\varphi:M\to \hat{M}$ be the orientation reversing diffeomorphism given by $\varphi(p)=p$.
Given $a,b\in H^2(M;\Z)$ we have $\varphi(a)=a,\varphi(b)=b$ and $Q_{\hat{M}}(\varphi(a),\varphi(b))=-Q_M(a,b)$, where for sake of understanding we write explicitly the intersection forms of each manifold.

It follows from Theorem \ref{thm:baldridge} that $a$ is a basic class for $M$ if and only if $\varphi(a)$ is a basic class for $\hat{M}$.
Applying (\ref{equ:adj}) to $\varphi(\s)$ we obtain
\[ \ba{rcl} \chi^-_M(\s) &=& \chi^-_{\hat{M}}(\varphi(\s))\\
&\geq& Q_{\hat{M}}(\varphi(\s),\varphi(\s)) +Q_{\hat{M}}(\varphi(\s),\varphi(-\k))\\
&=& -Q_M(\s,\s) -Q_M(\s,-\k) \\
&=&|Q_M(\s,\s)|+Q_M(\s,\k).\ea \]
\end{proof}

%In \cite{FV07} we show that the canonical class of the symplectic structure satisfies the
%following (note that the case $b_2^+(M)>1$ also follows from the previous theorem.

%\begin{proposition} \label{prop:pullback} Let $(M,\omega)$ be a symplectic manifold admitting a free circle
%action with nontorsion Euler class $e \in H^2(N)$, where $N$ is the orbit space. Then the
%canonical class $K \in H^2(M)$ of the symplectic structure is the pull-back of a class $\zeta \in
%H^2(N)$, well--defined up to the addition of a multiple of $e$.\end{proposition}

%===============================
\subsection{Twisted Alexander polynomials and SW--invariants}

We are in position now to prove our main theorem. Note that the second part is already contained
in \cite{FV08a} and \cite{FV07}.

\begin{theorem}
  Let $M$ be a $4$--manifold with $b_2^+(M) > 1$ admitting a free circle action with  orbit
space $N$. Let $e\in H^2(N;\Z)$ be the  Euler class. Assume that either $e=0$ or $e$ non--torsion.
 Let $\s \in H^{2}(M;\Z)$ such that $\phi=p_*(\s)\ne 0 \in H^1(N;\Z)$.
 Then for any epimorphism $\alpha : \pi_1(N) \to G$ onto a finite group we have
 \[ \chi^-_M(\s) \geq \frac{1}{|G|}\left(\deg(\Delta_{N,\phi}^{\alpha})-2 \div \, \phi_{\a}\right)+|\s\cdot \s|.\]
Furthermore, if $\s$ is represented by a symplectic form, then
$\Delta_{N,\phi}^{\alpha}$ is monic.
\end{theorem}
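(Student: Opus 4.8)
The plan is to pass to finite covers of $M$ and $N$, where the twisted Alexander polynomial becomes an untwisted one via Shapiro's lemma, and then combine the adjunction inequality from the corollary above with the Meng--Taubes--Baldridge identification of Seiberg--Witten invariants with the Alexander polynomial. First I would let $\tilde N \to N$ be the finite regular cover corresponding to $\ker \alpha \subset \pi_1(N)$, with deck group $G$, and let $\tilde M \to M$ be the induced pullback cover; since the circle action is free and the cover is pulled back along $p$, the space $\tilde M$ is again a principal $S^1$--bundle over $\tilde N$ with projection $\tilde p$, the same (pulled back) Euler class, and $b_2^+(\tilde M) > 1$ is inherited. The class $\sigma$ pulls back to $\tilde\sigma \in H^2(\tilde M;\Z)$ with $\tilde p_*(\tilde\sigma) = \phi_\alpha$, the restriction of $\phi$ to $\pi_1(\tilde N) = \ker\alpha$.

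The key algebraic input is Shapiro's lemma: the twisted homology $H_*(N;\Z[G][t^{\pm 1}])$ computing $\Delta_{N,\phi}^\alpha$ agrees with the ordinary $\Z[t^{\pm 1}]$--homology of the cover $\tilde N$ with respect to $\phi_\alpha$, so that $\Delta_{N,\phi}^\alpha \doteq \Delta_{\tilde N, \phi_\alpha}$ up to units. I would then invoke Theorem~\ref{thm:baldridge} on $\tilde M$ together with Theorem~\ref{thm:mt} to relate the Seiberg--Witten basic classes of $\tilde M$ to the support of $\Delta_{\tilde N}$, and hence to control the degree of $\Delta_{\tilde N, \phi_\alpha}(t)$ by the Thurston--norm--type data. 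Concretely, the Newton polytope of $\Delta_{\tilde N}$ (in $\Z[H_1(\tilde N)/\mathrm{tors}]$) is built from the SW basic classes of $\tilde N$, and evaluating $\phi_\alpha$ on this polytope produces a class whose degree, after the $\frac12$--correction appearing in Theorem~\ref{thm:mt} (this is the origin of the $-2\div\phi_\alpha$ term), is bounded by the span of $\kappa \cdot \tilde\sigma$ over basic classes $\kappa$.

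The main obstacle, and the heart of the argument, will be the bookkeeping that converts these facts into the stated numerical inequality with the correct constant $\frac{1}{|G|}$. I would apply the adjunction inequality (\ref{equ:adj2}) on $\tilde M$, using that $\chi^-_{\tilde M}(\tilde\sigma) \leq |G| \cdot \chi^-_M(\sigma)$ because a minimal--complexity surface dual to $\sigma$ pulls back to a (possibly disconnected) surface dual to $\tilde\sigma$ with complexity multiplied by at most the degree $|G|$ of the cover, and that $\tilde\sigma \cdot \tilde\sigma = |G|\, \sigma \cdot \sigma$ by multiplicativity of the intersection form under finite covers. Choosing $\kappa$ to be the basic class of $\tilde M$ realizing the extremal pairing with $\tilde\sigma$, the right side of (\ref{equ:adj2}) on $\tilde M$ becomes $|\tilde\sigma \cdot \tilde\sigma| + \deg(\Delta_{\tilde N, \phi_\alpha}) - 2\div\phi_\alpha$; dividing through by $|G|$ and using $\Delta_{\tilde N,\phi_\alpha} \doteq \Delta_{N,\phi}^\alpha$ yields the claim. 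The delicate points will be verifying that the correct SW basic class of $\tilde M$ realizes the full degree span (so that the bound is sharp enough) and that the divisibility correction term matches, since the $\frac12 f(\xi)$ of Theorem~\ref{thm:mt} interacts with the divisibility of $\phi_\alpha$.

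For the final monicity statement, I would use Taubes' theorem that a symplectic $4$--manifold $\tilde M$ (the cover inherits a symplectic structure representing $\tilde\sigma$) has $SW_{\tilde M}(\kappa_{\text{can}}) = \pm 1$ for the canonical class realizing the extremal adjunction, which forces the corresponding top coefficient of $\Delta_{\tilde N,\phi_\alpha}$, hence of $\Delta_{N,\phi}^\alpha$, to be $\pm 1$; I would cite \cite{FV08a} and \cite{FV07} for the detailed form of this implication rather than reproving it.
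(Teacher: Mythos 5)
Your proposal follows essentially the same route as the paper's proof: pass to the regular $G$--covers $N_\alpha \to N$ and $M_\alpha \to M$, identify $\Delta_{N,\phi}^{\alpha}$ with $\Delta_{N_\alpha,\phi_\alpha}$ via Shapiro's lemma, bound $\deg(\Delta_{N_\alpha,\phi_\alpha})$ by the span of $\kappa\cdot\pi^*\sigma$ over basic classes using Theorems \ref{thm:mt} and \ref{thm:baldridge}, and feed this into the adjunction inequality on the cover together with the $\frac{1}{|G|}$ scaling of complexity and self--intersection. One small correction: the $-2\,\mbox{div}\,\phi_\alpha$ term originates from the factor $(t^{\mbox{\scriptsize div}\,\phi_\alpha}-1)^2$ in the identity $\Delta_{N_\alpha,\phi_\alpha}=(t^{\mbox{\scriptsize div}\,\phi_\alpha}-1)^2\sum_{g}a_g t^{\phi_\alpha(g)}$ relating the one--variable to the multivariable Alexander polynomial, not from the $\frac12 f(\xi)$ normalization in Theorem \ref{thm:mt}.
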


\begin{proof}
%Our
%goal is to apply the adjunction inequality and Taubes' results (\cite{Ta94,Ta95}) on the Seiberg--Witten invariants of $M$ to impose constraints on
%the twisted Alexander polynomials of $N$.
 Let $M$ be a 4--manifold admitting a free circle
action  such that $b_2^+(M) > 1$ and denote by $N$ its orbit space.
It follows from equation (\ref{betti}) and the  remarks which follow (\ref{betti}) that $b_1(N)\geq 2$.
We will first analyze the  ordinary
$1$--variable Alexander polynomial $\Delta_{N,\phi}$. By \cite{FV08a} we can write this polynomial
as \be \label{equ:onemulti} \Delta_{N,\phi} = (t^{div {\phi}} - 1)^2 \cdot \sum_{g \in H} a_{g}
t^{\phi (g)} \in \Z[t^{\pm 1}], \ee where $H$ is the maximal free abelian quotient of $\pi_{1}(N)$
and $\Delta_{N} = \sum_{g \in H} a_{g} \cdot g \in \Z[H]$ is the ordinary multivariable Alexander
polynomial of $N$. By Theorem \ref{thm:mt} we can write
\be \label{inter} \Delta_{N,\phi} = \pm (t^{div {\phi}} - 1)^2 \sum_{\xi\in H^2(N)} SW_N(\xi)t^{\frac{1}{2}\phi \cdot \xi}. \ee
We will use now Equation (\ref{baldfor}) to write $\Delta_{N,\phi}$ in terms of the $4$--dimensional Seiberg-Witten invariants of $M$.
In order to do so, observe that for all classes $\xi \in H^2(N)$ we can write $\xi \cdot \phi = \xi \cdot p_{*} (\s) = p^{*}( \xi) \cdot \s = \kappa \cdot \s$ where $\kappa
 = p^{*}( \xi)$. Grouping together the contributions of the $3$--dimensional basic classes in terms of their image in $H^2(M)$, and using (\ref{baldfor}) we get
\be \label{equ:alexsw}  \ba{rcl} \Delta_{N,\phi} & = & \pm (t^{div \, \phi}-1)^2 \sum\limits_{\kappa \in p^{*} H^2(N)}\hspace{0.3cm} \sum\limits_{\xi\in (p^*)^{-1}(\kappa)} SW_N(\xi) t^{\frac{1}{2}\phi \cdot \xi} \\ \\ & = & \pm (t^{div \, \phi}-1)^2 \sum\limits_{\kappa \in p^{*} H^2(N)} SW_M(\kappa)t^{\frac{1}{2} \s \cdot
\kappa }.\ea \ee
Note that $\k$ is a basic class if and only if $-\k$ is a basic class.
It now follows that
\[  \ba{rl}& \mbox{max}\{ \k\cdot \s | \k \mbox{ basic  class of $M$}\}\\
 =&\mbox{max}\{ \k\cdot \s | \k \mbox{ basic  class of $M$ and }\k\in p^*(H^2(N;\Z))\}\\
\geq&\deg(\Delta_{N,\phi})-2\div(\phi).\ea \]
Combining this inequality with the adjunction inequality (\ref{equ:adj2}) we get
\be \label{equ:inequ2} \chi^-_M(\s) \geq \deg(\Delta_{N,\phi})-2 \div ( \phi)+|\s\cdot \s|.\ee

Now assume that $\s$ is represented by a symplectic form $\w$.
Taubes' constraints,
applied to the symplectic manifold $(M,\w)$, assert that if $K \in H^{2}(M)$ is the canonical class, then $SW_{M}(-K)
= 1$. Moreover, among all basic classes $\kappa \in H^2(M)$, we have \begin{equation}
\label{more} - K \cdot \s \leq \kappa \cdot \s, \end{equation} with
equality possible only for $\kappa = - K$.
%(When $b^{+}_{2}(M) = 1$, this statement applies to the Seiberg-Witten invariants evaluated in
%Taubes' chamber, but as remarked in Theorem \ref{bald} this specification is not a concern in our situation.)
 It now follows immediately from (\ref{equ:alexsw}) that $\Delta_{N,\phi}$ is a monic
polynomial.
%, and remembering the symmetry of $SW_N$ (or $\Delta_{N,\phi}$), we see
%that its Laurent degree is $d = K \cdot \s + 2 \div \phi = \zeta \cdot \phi + 2 \div \phi$.

Now let $\a:\pi_1(N)\to G$ be an
epimorphism onto  a finite group $G$. We denote by $\pi: N_{\a} \to N$ the corresponding regular $G$--cover of $N$.
It is well--known that $b_1(N_{\a}) \geq b_1(N)$.  The
epimorphism $\pi_1(M) \to \pi_{1}(N) \to G$ determines a regular $G$--cover of $M$ that we will
denote (with slight abuse of notation) $\pi: M_{\a} \to M$. These covers are related by the
commutative diagram
\begin{equation} \label{commie}
\begin{array}{ccc} M_\a & \stackrel{\pi}{\longrightarrow} & M \\ \downarrow &  & \downarrow \\ N_\a &
\stackrel{\pi}{\longrightarrow} & N \end{array} \end{equation} where the principal
$S^1$--fibration $p_{\a} : M_{\a} \to N_{\a}$ has Euler class $e_{\a} = \pi^{*} e \in
H^{2}(N_\a)$. Note that $e_\a=0$ whenever $e=0$, and $e_\a$ is non--torsion if and only if $e$ is non--torsion.
In particular $b_2^+(M_G)\geq b_2^+(M) > 1$.

Now let $\s \in H^2(M;\Z)$ and $\S\subset M $ a surface of minimal complexity representing $\s$. Note that $\pi^{-1}(\S)\subset M_\a$ is dual to $\pi^*(\s)$.
We  have
\[ \ba{rcl} \chi^-_M(\s)=\chi^-(\S)&=&\frac{1}{|G|}\chi^-(\pi^{-1}(\S))\geq \frac{1}{|G|} \chi^-_{M_\a}(\pi^*\s)\\[0.1cm]
\s \cdot \s=\S\cdot \S&=&\frac{1}{|G|}(\pi^{-1}(\S)\cdot \pi^{-1}(\S))= \frac{1}{|G|}(\pi^*(\s)\cdot \pi^*(\s)).\ea   \]
(Note that is not known whether or not $\pi^{-1}(\S)$ is a surface of minimal complexity.)
Applying (\ref{equ:inequ2}) to $M_\a$ it now follows that
\[ \ba{rcl} \chi^-_M(\s)&\geq& \frac{1}{|G|} \chi^-_{M_\a}(\pi^*(\s)) \\[0.1cm]
&\geq & \frac{1}{|G|} \left(\deg(\Delta_{N_\a,\phi_\a})-2 \div ( \phi_\a)+|\pi^*(\s)\cdot \pi^*(\s)|\right)\\[0.1cm]
&= & \frac{1}{|G|} \left(\deg(\Delta_{N_\a,\phi_\a})-2 \div ( \phi_\a)\right)+| \s \cdot \s|.\ea \]
This, together with the relation
$\Delta_{N,\phi}^{\a} = \Delta_{N_\a,\phi_\a}$ from \cite{FV08a} proves the first part of the theorem.

Now assume that $\s$ is represented by a symplectic structure $\w$.
As $(M,\omega)$ is symplectic, $M_{\a}$ inherits a symplectic form $\omega_{\a} :=
\pi^{*}\omega$ which represents $\pi^*(\s)$. Clearly $\phi_\a=p_*(\pi^*(\s))$. From the above we get that $\Delta_{N_\a,\phi_\a}$ is monic. Again the equality
$\Delta_{N,\phi}^{\a} = \Delta_{N_\a,\phi_\a}$ concludes the proof. \end{proof}

%==========================================================
\section{Examples} \label{section:examples}

%==========================================================
\subsection{Applications of Theorem \ref{thm:joint}}
%Furthermore this implies that $\pi_1(X(K))^{(1)}$
%is perfect, i.e., $\pi_1(X(K))^{(n)}=\pi_1(X(K))^{(1)}$ for any $n>1$. (For a group $G$, $G^{(n)}$
%is defined inductively as follows: $G^{(0)} := G$ and $G^{(n+1)} := [G^{(n)},G^{(n)}]$.) Therefore
%the genus bounds of Cochran \cite{Co04} and Harvey \cite{Ha05} vanish as well.

Applications of Seiberg--Witten invariants to the existence of symplectic
structures on 4--manifolds with a free circle action have been studied
by many authors. In the case of trivial Euler class we refer to
\cite{Kr98,Kr99,McC01,Vi03,FV08a}, in the case of non--trivial Euler class the
first results were obtained by Baldridge \cite{Ba01}.
Furthermore  in \cite{FV08a} and \cite{FV07}
we give many explicit examples of 4--manifolds with a free circle action where twisted Alexander polynomials can be used to show that they do  not support a symplectic structure.
In this section we therefore concentrate on examples regarding the minimal complexity of surfaces in 4--manifolds with a free circle action.

Let $T$ be the 3--torus. Recall that the 3--torus has vanishing  Thurston norm and that its multivariable Alexander polynomial is $1$.
 Let $x,y,z\in H_1(T;\Z)$ be a basis corresponding to the three circles of $T=S^1\times S^1\times S^1$
and let $C\subset T$ be a circle representing $x$. Throughout this section we denote by $\phi \in H^1(N;\Z)$ the class given by $\phi(x)=1, \phi(y)=\phi(z)=0$.
Pick a meridian $\mu_C$ and a longitude $\l_C$ for $C$ such that $[\mu_C]=0$ and $[\l_C]=x$ in $H_1(T;\Z)$. Next, let $K\subset S^3$ be an oriented knot. We denote by $\mu_K$ and $\l_K$ its
meridian and longitude. Now, splice the two exteriors to form the $3$--manifold
\begin{equation} \label{surge} T_K = (T\sm \nu C)\cup (S^3\sm \nu K) \end{equation}
where the gluing map on the boundary $2$--tori identifies $\mu_K$ with $\l_C$ and $\l_K$ with
$\mu_C$.
%Note that, if $N_{K}$ is the $0$--surgery of $S^3$ along $K$, and if  $m$ is the image of
%the canonically framed curve $\mu_K$ in $N_K$,  then we can write $X_K$ as normal connected sum
%of $X$ and $N_K$, i.e. $X_K = X \#_{C = m} N_{K}$.

As the surgery of Equation (\ref{surge}) amounts to the substitution of a solid torus with a
homology solid torus, respecting the boundary maps, and as the class of $C$ is primitive, it is
easy to see from the Mayer--Vietoris sequence that the inclusion maps induce isomorphisms
$H_1(T;\Z)\xleftarrow{\cong} H_1(T\sm \nu C;\Z)\xrightarrow{\cong} H_1(T_K;\Z)$ which we use to
identify these groups for the remainder of this section. We also identify $H^1(T;\Z)=H^1(T\sm \nu C;\Z)=H^1(T_K;\Z)$.

 Let $e=PD(z)$ and let $M_K(e)$ be the total space of the principal $S^1$--bundle over $T_K$ with Euler class $e$.
We now specialize to the case that  $K$ is the Conway knot $11_{401}$, its  diagram is given in Figure
\ref{example11n34}.
 \begin{figure}[h] \begin{center}
\begin{tabular}{cc}
\includegraphics[scale=0.27]{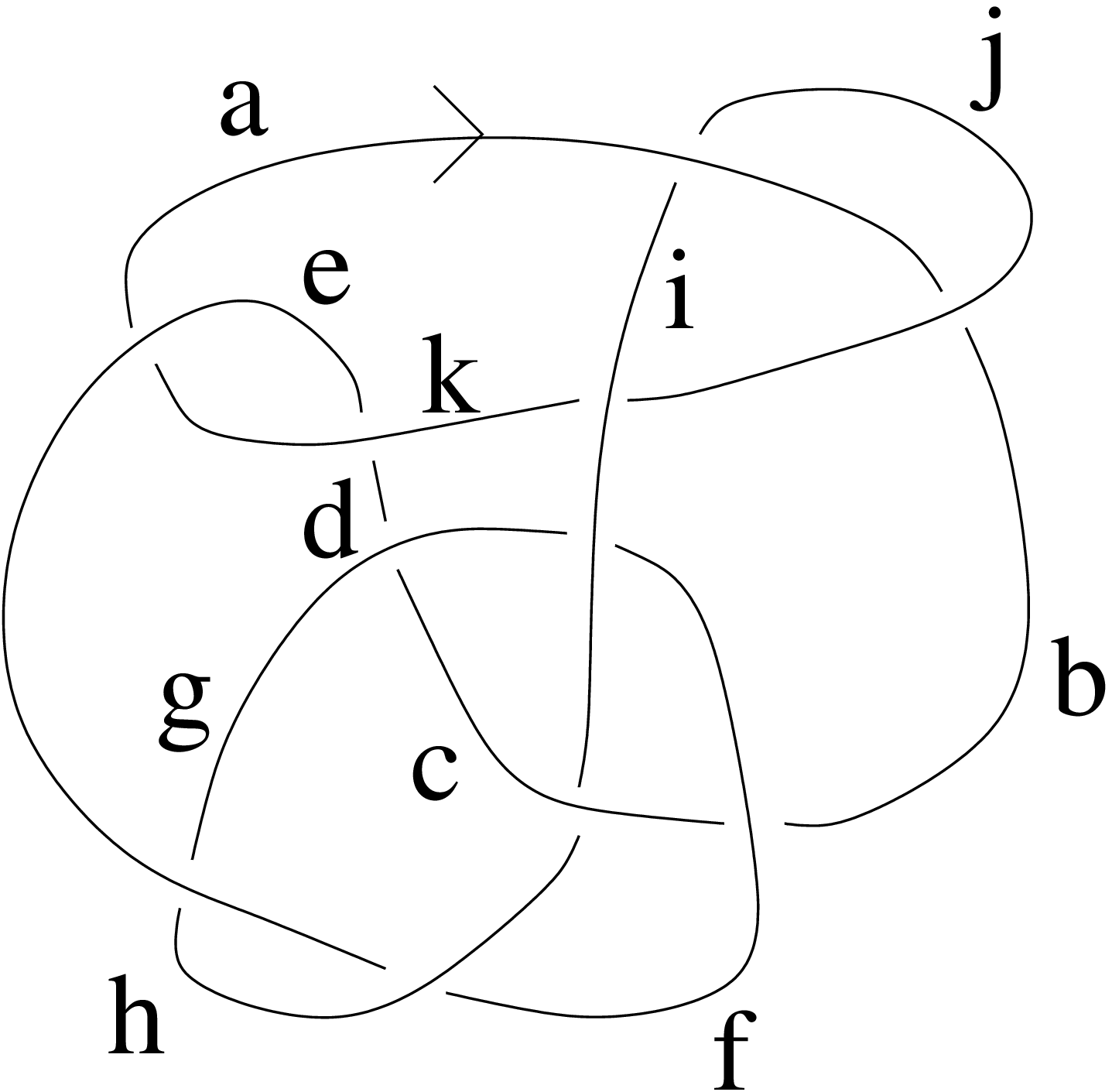}&\hspace{1cm}
\includegraphics[scale=0.28]{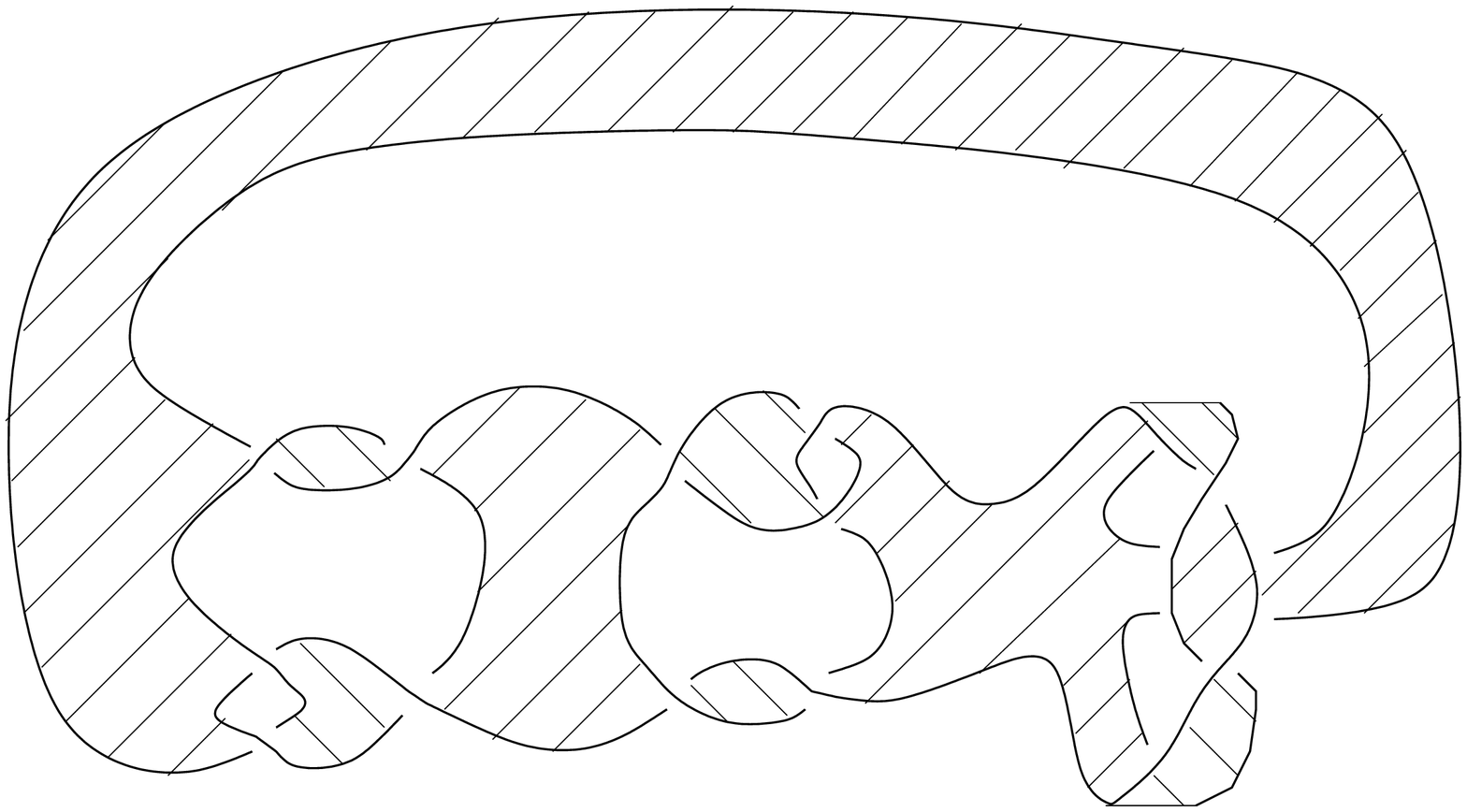}
\end{tabular}
\caption{The Conway knot
$11_{401}$ and a Seifert surface of genus 3 (from \cite{Ga84}).} \label{example11n34}
\end{center}
 \end{figure}
It is well--known that the genus of the Conway knot is 3 and that $\Delta_K=1$.
Since $\partial (S^3\sm \nu K)$ is an incompressible torus in $T_K$ it follows from the additivity of the Thurston norm that
 $\chi^-_{T_K}(\phi)=5+1=6$.

According to the following proposition the adjunction inequality does not determine $\chi^-_{M_K(e)}$, but for certain $\s$ the twisted Alexander polynomials of Theorem \ref{thm:joint} detect $\chi^-_{M_K(e)}(\s)$.

\begin{proposition} \label{prop:ex} Let $K \subset S^3$ be the Conway knot.
Then
\bn
\item $SW_{M_K(e)}=1$,
\item For any $\s$ with $p_*(\s)=\phi\in H^1(T_K;\Z)$ we have
\[ \chi^-_{M_K(e)}(\s)=\chi^-_{T_K}(p_*(\s))+|\s\cdot \s|=6+|\s\cdot \s|.\]
\en
\end{proposition}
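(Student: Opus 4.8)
My plan is to determine the Seiberg--Witten invariants of $M_K(e)$ and then to sandwich $\chi^-_{M_K(e)}(\s)$ between an upper bound from property $(*)$ and a lower bound from a twisted Alexander polynomial. The first step is to show that the multivariable Alexander polynomial of $T_K$ is trivial, $\Delta_{T_K}\doteq 1\in\Z[H]$ with $H=H_1(T_K;\Z)=\Z^3$. Since $\Delta_K=1$, a Seifert presentation $V-tV^{T}$ of $K$ is invertible over $\Z[t^{\pm1}]$, so the Alexander module $H_1(S^3\sm\nu K;\Z[t^{\pm1}])$ vanishes. Writing $T_K=A\cup_{T^2}B$ with $A=T\sm\nu C\simeq S^1\times W$ ($W$ a once--punctured torus) and $B=S^3\sm\nu K$, the images of $\pi_1(B)$ and of $\pi_1(T^2)$ in $H$ equal the infinite cyclic group $\langle x\rangle$, so the $\Z[H]$--homology of $B$ and of $T^2$ is obtained by flat base change from their $\langle x\rangle$--covers; in particular $H_1(B;\Z[H])=0$. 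A Mayer--Vietoris computation with $\Z[H]$--coefficients then shows that $H_1(T^2;\Z[H])$, generated by $\mu_C=\lambda_K$, maps isomorphically onto $H_1(A;\Z[H])$ (the class of $\partial W$ being a generator of $H_1(W;\Z[y^{\pm1},z^{\pm1}])$), while $H_0(T^2;\Z[H])\to H_0(B;\Z[H])$ is injective; hence $H_1(T_K;\Z[H])=0$ and $\Delta_{T_K}\doteq 1$.

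Granting this, Theorem \ref{thm:mt} (which applies since $b_1(T_K)=3>1$) forces $SW_{T_K}$ to be supported on the single class $\xi=0$ with $SW_{T_K}(0)=\pm1$, as $H_1(T_K;\Z)$ is torsion--free so that $f$ is an isomorphism. Because $e=PD(z)$ is non--torsion, (\ref{betti}) gives $b_2^+(M_K(e))=2>1$, so Theorem \ref{thm:baldridge} applies and $SW_{M_K(e)}(\kappa)=\sum_{\xi'\equiv 0\,(e)}SW_{T_K}(\xi')$ picks up only $\xi'=0$; thus $SW_{M_K(e)}$ is supported on $\kappa=0$ with value $\pm1$, i.e. $SW_{M_K(e)}=1$, which is (1). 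For the upper bound in (2), $\phi$ is primitive, so $\div\phi=1$ and by Lemma \ref{lem:propstar} it is enough to exhibit a connected Thurston--norm minimizing surface dual to $\phi$; capping the once--punctured torus $W$ (with $\chi^-=1$) along $\mu_C=\lambda_K$ with a genus--$3$ Seifert surface of $K$ (with $\chi^-=5$) produces a connected surface of $\chi^-=6=\chi^-_{T_K}(\phi)$. Hence every $\s$ with $p_*(\s)=\phi$ has property $(*)$, and Lemma \ref{lem:starmin} yields $\chi^-_{M_K(e)}(\s)\le 6+|\s\cdot\s|$.

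The lower bound is the crux. The adjunction inequality (\ref{equ:adj2}) applied to the unique basic class $\kappa=0$ gives only $\chi^-_{M_K(e)}(\s)\ge|\s\cdot\s|$, which is precisely why the adjunction inequality alone cannot determine $\chi^-_{M_K(e)}(\s)$. I would instead invoke Theorem \ref{thm:joint}: it suffices to produce an epimorphism $\alpha:\pi_1(T_K)\to G$ onto a finite group with $\deg\Delta_{T_K,\phi}^{\alpha}-2\div\phi_\alpha\ge 6|G|$, since the resulting bound $\chi^-_{M_K(e)}(\s)\ge 6+|\s\cdot\s|$ then matches the upper bound and forces equality. Via the identity $\Delta_{T_K,\phi}^{\alpha}=\Delta_{(T_K)_\alpha,\phi_\alpha}$ and a twisted Mayer--Vietoris argument paralleling the one above, in which the $A=S^1\times W$ side contributes in a controlled way so that the essential content sits on the knot exterior $B$, this reduces to the detection of the Seifert genus of the Conway knot by a twisted Alexander polynomial. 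Exhibiting such an $\alpha$ is the main obstacle: because $\Delta_K=1$, all abelian and cyclic--cover invariants are trivial, so $\alpha$ must be genuinely non--abelian, and I would supply it either by an explicit finite non--abelian representation of the knot group together with a direct degree computation, or by appeal to the fact that twisted Alexander polynomials detect the Thurston norm.
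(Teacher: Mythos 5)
Your overall architecture --- trivial multivariable Alexander polynomial via Mayer--Vietoris, hence $SW_{M_K(e)}=1$ by Meng--Taubes and Baldridge; upper bound from a connected norm-minimizing surface, Lemma \ref{lem:propstar} and Lemma \ref{lem:starmin}; lower bound from Theorem \ref{thm:joint} --- is exactly the paper's, and your treatment of part (1) and of the upper bound is correct (you even verify property $(*)$ explicitly, which the paper leaves implicit). But the lower bound, which you rightly identify as the crux, is where your proposal has a genuine gap, and in two respects. First, your target inequality $\deg\Delta_{T_K,\phi}^{\alpha}-2\div\phi_\alpha\ge 6|G|$ is stronger than what is actually achieved or needed. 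The paper produces an explicit epimorphism $\alpha:\pi_1(N_K)\to A_5$ (found by computer search, pulled back to $\pi_1(T_K)$ via a degree-one map $T_K\to N_K$ as you sketch), computes $\deg(\Delta_{K,\phi}^{\alpha})\ge 209$ by working over $\F_{53}$, and via the Mayer--Vietoris identity $\deg(\Delta_{T_K,\phi}^{\alpha})=\deg(\Delta_{K,\phi}^{\alpha})+2|G|$ obtains only
\[ \chi^-_{M_K(e)}(\s)\;\ge\;\tfrac{1}{60}\left(209+120-2\right)+|\s\cdot\s|\;=\;\tfrac{327}{60}+|\s\cdot\s|\;\approx\;5.45+|\s\cdot\s|, \]
which falls short of $6|G|/|G|=6$. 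The proof closes the gap only by observing that $\chi^-_{M_K(e)}(\s)-|\s\cdot\s|$ is an integer, so a bound strictly exceeding $5$ already forces the value $6$. Without this integrality step your plan requires a representation strictly better than the one known to exist, and you have not exhibited one.

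Second, your fallback --- ``appeal to the fact that twisted Alexander polynomials detect the Thurston norm'' --- is not available here. No such theorem is at your disposal in this paper; on the contrary, the paper emphasizes that the record of these invariants at detecting the Thurston norm is mixed, and Lemma \ref{lem:bad} shows they can all vanish identically. The entire content of Proposition \ref{prop:ex} is that for this particular $T_K$ one can explicitly produce a finite non-abelian representation whose twisted polynomial has large enough degree; replacing that computation by an appeal to a general detection principle would be circular in spirit and unsupported in substance. To complete your argument you must (i) exhibit the representation $\alpha$ concretely (the paper uses an $A_5$-representation of the Conway knot group and a computation over $\F_{53}$, lifted to $\zt$ via \cite[Proposition~6.1]{FV08}), (ii) justify the identity $\deg(\Delta_{T_K,\phi}^{\alpha})=\deg(\Delta_{K,\phi}^{\alpha})+2|G|$ by the twisted Mayer--Vietoris computation you only gesture at, and (iii) add the integrality argument to convert a bound of $\tfrac{327}{60}$ into the required $6$.
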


\begin{proof}
Note that if $J$ is the unknot, then $T_J=T$. Now we use another Alexander polynomial one knot, namely $K$ to build $T_K$. It is not difficult to see (using e.g.
Mayer--Vietoris sequences) that $\Delta_{T_K}=\Delta_{T_J}=\Delta_{T}=1$. It follows from Theorem \ref{thm:mt} and Theorem \ref{thm:baldridge} that $SW_{M_K(e)}=1$.

Now let  $\s\in H^2(M_K(e);\Z)$ with $p_*(\s)=\phi \in H^1(T_K;\Z)$.
First observe
that by obstruction theory we can define a proper map $T \setminus \nu C \to S^1 \times D^2$
which realizes the map $\pi_1(T\sm \nu C)\to H_1(T\sm \nu C)\to \Z=H_1(S^1\times D^2)$ given by  $\phi\in H^1(T\sm \nu C;\Z)=\hom(\pi_1(T\sm \nu C),\Z)$  and which extends the diffeomorphism $\partial (T \setminus \nu
C) \cong S^1 \times
\partial D^2$ given by identifying $\lambda_C$ and $\mu_{C}$ with $S^1$ and $\partial D^2$
respectively. Out of this we construct a degree one map \[ T_{K} = (T\sm \nu C) \cup (S^3\sm \nu K) \to
(S^1 \times D^2) \cup (S^3\sm \nu K) = N_{K} \]  which induces an epimorphism $ \pi_1(T^3_K) \to \pi_1(N_K) $. Given an epimorphism
$\alpha: \pi_1(N_{K}) \to G$ onto a finite group we obtain a corresponding epimorphism of
$\pi_1(T_K)$ onto $G$, that we denote for simplicity by $\alpha$ as well.

We now get the following Mayer--Vietoris sequence (cf. \cite{FK06})
\[ \ba{cccccccccccccccc}
\hspace{-0.1cm}\hspace{-0.1cm}&\hspace{-0.1cm}\hspace{-0.1cm}\to\hspace{-0.1cm}&\hspace{-0.1cm}H_1(S^1\times S^1;\Z[G]\xpm)\hspace{-0.1cm}&\hspace{-0.1cm}\to\hspace{-0.1cm}&\hspace{-0.1cm}\ba{c} H_1(T^3\sm \nu C;\Z[G]\xpm)\\ H_1(S^3\sm \nu K;\Z[G]\xpm)\ea \hspace{-0.1cm}&\hspace{-0.1cm}\to\hspace{-0.1cm}&\hspace{-0.1cm}H_1(T_K;\Z[G]\xpm)\hspace{-0.1cm}&\hspace{-0.1cm}\to\hspace{-0.1cm}&\hspace{-0.1cm}\\[2mm]
\hspace{-0.1cm}&\hspace{-0.1cm}\to\hspace{-0.1cm}&\hspace{-0.1cm}H_0(S^1\times S^1;\Z[G]\xpm)\hspace{-0.1cm}&\hspace{-0.1cm}\to\hspace{-0.1cm}&\hspace{-0.1cm}\ba{c} H_0(T^3\sm \nu C;\Z[G]\xpm)\\ H_0(S^3\sm \nu K;\Z[G]\xpm) \ea \hspace{-0.1cm}&\hspace{-0.1cm}\to\hspace{-0.1cm}&\hspace{-0.1cm}H_0(T_K;\Z[G]\xpm)\hspace{-0.1cm}&\hspace{-0.1cm}\to\hspace{-0.1cm}&\hspace{-0.1cm} \ea \]
First note that the maps
\[ \ba{rcl} \phi\times \a: \pi_1(S^1\times S^1)&\to& \Z\times G\\
 \phi\times \a: \pi_1(T^3\sm \nu C)&\to& \Z\times G \ea \]
 both factor through $\phi$. In particular we get a commutative diagram
\[ \xymatrix{ H_i(S^1\times S^1;\Z[G]\xpm) \ar[d]^\cong \ar[r]^{\iota} &H_i(T^3\sm \nu C;\Z[G]\xpm)  \ar[d]^\cong \\
 H_i(S^1\times S^1;\Z\xpm) \otimes_\Z \Z[G]  \ar[r]^{\iota} & H_i(T^3\sm \nu C;\Z\xpm) \otimes_\Z \Z[G] .} \]
 It is now easy to see that the bottom map is an isomorphism for $i=0$.
 Furthermore $ H_1(S^1\times S^1;\Z\xpm) $ is given by $\Z c$ where $c$ is a curve representing the commutator $ [y,z]$.
On the other hand    $ H_1(T^3\sm \nu C;\Z\xpm) $ is given by $\Z y\oplus \Z z$. We see that the inclusion induced map
 $ H_1(S^1\times S^1;\Z\xpm) \to H_1(T^3\sm \nu C;\Z\xpm) $ is trivial. It follows from this discussion that the above Mayer--Vietoris sequence descends to a sequence of the form:
 \[ 0 \to \Z^{|G|} \oplus \Z^{|G|} \oplus H_1(S^3\sm \nu K;\Z[G]\xpm) \to H_1(T_K;\Z[G]\xpm) \to 0.\]
It follows that
\[ \ba{rcl} \deg(\Delta_{T_K,\phi}^\a)&=&\mbox{rank}_\Z(H_1(T_K;\Z[G]\xpm))\\
&=&\mbox{rank}_\Z(H_1(S^3\sm \nu K;\Z[G]\xpm))+2|G|\\
&=& \deg(\Delta_{K,\phi}^\a)+2|G|.\ea \]

We now pick a particular epimorphism $\a:\pi_1(N_K)\to G$.
The fundamental group
$\pi_1(N_K)$ is generated by the meridians $a,b,\dots,k$ of the
segments in the knot diagram of Figure \ref{example11n34}. The
relations are \[ \ba{rclrclrclrclrcl}
a&=&jbj^{-1},&    b&=&fcf^{-1},&    c&=&g^{-1}dg,&    d&=&k^{-1}ek,&   \\
e&=&h^{-1}fh,&    f&=&igi^{-1},&    g&=&e^{-1}he,&    h&=&c^{-1}ic,\\
i&=&aja^{-1},&    j&=&iki^{-1},&    k&=&e^{-1}ae,&  a^{-1}&=&jfg^{-1}k^{-1}h^{-1}ie^{-1}c^{-1}aie^{-1}. \ea \] Using the program \emph{KnotTwister}
\cite{Fr07}  we   found the homomorphism  $\varphi:\pi_1(N_K)\to A_5$ given by
 \[
\ba{rclrclrclrcl}
a&\mapsto&(142),& b&\mapsto&(451),& c&\mapsto&(451),& d&\mapsto&(453),\\
e&\mapsto&(453),& f&\mapsto&(351),& g&\mapsto&(351),& h&\mapsto&(431),\\
i&\mapsto&(351),& j&\mapsto&(352),& k&\mapsto&(321),&\ea \]
 where we use cycle notation.
%The generators of $\pi_1(X(K))$ are sent to the element in $S_5$
%given by the cycle with the corresponding capital letter.
Using
\emph{KnotTwister} we compute that the degree of $\Delta_{K,\phi}^\a$ computed as an element in $\F_{53}\tpm$ equals 209. It follows from \cite[Proposition~6.1]{FV08} that the degree of  $\Delta_{K,\phi}^\a$ computed as an element in $\zt$ equals at least 209.
We also have $\div(\phi_\a)=1$. It now follows from Theorem \ref{thm:joint} that
\[ \ba{rcl} \chi^-_{M_K(e)}(\s) &\geq& \frac{1}{|A_5|}\left(\deg(\Delta_{T_K,\phi}^{\alpha})-2 \div \, \phi_{\a}\right)+|\s\cdot \s|\\[0.1cm]
&=& \frac{1}{60}\left(\deg(\Delta_{K,\phi}^{\alpha})+2\cdot 60-2\right)+|\s\cdot \s|\\[1mm]
&\geq& \frac{1}{60}\left(209+2\cdot 60-2\right)+|\s\cdot \s|\\[1mm]
&= &\frac{327}{60}+|\s\cdot \s|.\ea \]
Lemma \ref{lem:starmin} asserts that $\chi^-_{M_K(e)}(\s) \leq 6 + |\s\cdot \s|$, as $\chi_{T_K}(\phi)$ equals $6$.
The claim now follows from Lemma \ref{lem:starmin} and the fact that $\chi^-_{M_K(e)}$ is necessarily an integer.
\end{proof}

%==========================================================
\subsection{The limitations of Theorem \ref{thm:joint}}
The following lemma says that in many cases all twisted Alexander polynomials will be zero and therefore Theorem \ref{thm:joint} will not be able to give any information on the minimal complexity of surfaces.

\begin{lemma} \label{lem:bad}
Let $N$ be a 3--manifold and $\phi \in H^1(N;\Z)$ such that $\Delta_{N,\phi}=0$. Then
$\Delta_{N,\phi}^\a =0$ for all epimorphisms $\a:\pi_1(N)\to G$ to a finite group $G$.
\end{lemma}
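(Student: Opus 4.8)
The plan is to show that the vanishing of the ordinary Alexander polynomial forces the twisted one to vanish by exhibiting the untwisted module as a quotient (or a summand after a suitable coefficient extension) of the twisted module, so that a positive-rank statement downstairs propagates upstairs. Recall that $\Delta_{N,\phi}=0$ as an element of $\Z[t^{\pm 1}]$ is equivalent to the statement that $H_1(N;\Z[t^{\pm 1}])$ has positive rank as a $\Z[t^{\pm 1}]$--module, i.e.\ it is not $\Z[t^{\pm 1}]$--torsion; dually, $\Delta_{N,\phi}^\a=0$ is equivalent to $H_1(N;\Z[G][t^{\pm 1}])$ having positive $\Z[t^{\pm 1}]$--rank. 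So the whole statement becomes a purely homological comparison of ranks, and I would phrase everything in terms of ranks of twisted homology modules over $\Z[t^{\pm 1}]$.

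First I would use the factorization of $\alpha\times\phi$ through the representation. The key algebraic input is that the $\pi_1(N)$--module $\Z[G][t^{\pm 1}]$ is \emph{induced}: writing $\alpha:\pi_1(N)\to G$ and $\phi:\pi_1(N)\to\Z$, the module $\Z[G\times\Z]$ is the induction $\mathrm{Ind}_{\ker\alpha}^{\pi_1(N)}\Z[\Z]$ of the one--dimensional $\ker\alpha$--module on which $\phi_\alpha$ acts. By Shapiro's lemma (which the paper already invokes in the proof of Theorem~\ref{thm:joint}), this gives
\[
H_1(N;\Z[G][t^{\pm 1}]) \;\cong\; H_1(N_\alpha;\Z[t^{\pm 1}]),
\]
where $N_\alpha$ is the $G$--cover and the $\Z[t^{\pm 1}]$--structure is the one induced by $\phi_\alpha=\phi\circ\pi_*$. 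This is exactly the identity $\Delta_{N,\phi}^\a=\Delta_{N_\alpha,\phi_\alpha}$ quoted from \cite{FV08a}. Thus it suffices to show: if the Alexander module $H_1(N;\Z[t^{\pm 1}])$ has positive rank, then so does $H_1(N_\alpha;\Z[t^{\pm 1}])$ for the pulled--back class $\phi_\alpha$.

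Next I would relate the covering module to the base module via a transfer/functoriality argument. The covering map $\pi:N_\alpha\to N$ induces a transfer homomorphism $H_1(N;\Z[t^{\pm1}])\to H_1(N_\alpha;\Z[t^{\pm1}])$ which, composed with the projection-induced map, is multiplication by $|G|$; alternatively, over a field such as $\Q$ or $\F_p$ the chain complex $C_*(\tilde N)\otimes_{\Z\pi_1(N)}\F[G][t^{\pm1}]$ computing the covering homology receives $C_*(\tilde N)\otimes_{\Z\pi_1(N)}\F[t^{\pm1}]$ as a direct summand (via the trivial representation $G\to 1$, since $\F[G]=\F\oplus(\text{augmentation ideal})$ when $\mathrm{char}\,\F\nmid|G|$). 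Tensoring the resolution with the fraction field $\Q(t)$ of $\Q[t^{\pm1}]$, the hypothesis says $H_1(N;\Q(t))\ne 0$, and the summand property forces $H_1(N_\alpha;\Q(t))\ne 0$, i.e.\ $\Delta_{N_\alpha,\phi_\alpha}=0$. The main obstacle is the characteristic issue: the clean direct-summand argument needs $\mathrm{char}\nmid|G|$, so I would run the rank comparison over $\Q$ (where $|G|$ is invertible and the transfer argument gives that the base module is a retract of the covering module), concluding $\mathrm{rank}_{\Q[t^{\pm1}]}H_1(N_\alpha;\Q[t^{\pm1}])\ge \mathrm{rank}_{\Q[t^{\pm1}]}H_1(N;\Q[t^{\pm1}])>0$, which yields $\Delta_{N,\phi}^\a=0$ over $\Z$ as desired. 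I expect the only delicate point to be confirming that the transfer composite is multiplication by $|G|$ at the level of these $\Z[t^{\pm1}]$--twisted chain complexes (so that it is a $\Q[t^{\pm1}]$--isomorphism after inverting $|G|$), but this is the standard covering-space transfer argument adapted to the constant $\phi_\alpha=\phi\circ\pi_*$ twisting, which is precisely the setting guaranteed by the factorization through $\phi$.
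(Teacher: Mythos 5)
Your proposal is correct, and its decisive step --- splitting the trivial representation off of $\Q[G]$ over $\Q$ so that the untwisted Alexander module becomes a direct summand of the twisted one, whence positive $\Q[t^{\pm 1}]$--rank propagates --- is exactly the paper's argument via Maschke's theorem. The detour through Shapiro's lemma and the cover $N_\a$, and the transfer formulation of the retraction, are valid but unnecessary: the decomposition $\Q[G]=\Q\oplus V$ can be applied directly to the coefficient module on $N$ itself, which is how the paper phrases it.
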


As an example, note that for any $N$ which is the direct sum of $N_1,N_2$ with $b_1(N_i)\geq 1$ we have
 $\Delta_{N,\phi}=0$ for any $\phi \in H^1(N;\Z)$. This can be seen using a straightforward
 Mayer--Vietoris argument. Another example is given by any $N$ which is the 0--framed surgery on a
 boundary link.

\begin{proof}
First note that for an epimorphism  $\b:\pi_1(N)\to H$ to a finite group $H$ we have $\Delta_{N,\phi}^\b=0$ if and only if
\[ \dim_{\Q}(H_1(N;\Q[H]\tpm))=\infty.\]
By Maschke's theorem (cf. e.g. \cite[p.~216]{Ro96}) we have a direct sum decomposition of $\Q[G]$--modules
\[ \Q[G]=\Q \oplus V,\]
where $\Q$ is the $\Q[G]$--module with the trivial $G$--action. It follows that
\[ (H_1(N;\Q[G]\tpm)\cong (H_1(N;\Q\tpm)\oplus (H_1(N;V\otimes_\Q \Q\tpm).\]
The lemma now follows immediately from the above observation applied to $H=e$ and $H=G$.
\end{proof}

%\begin{lemma} \ref{lem:bad}
%Assume we are given a 3--manifold $N$, $e\in H^2(N;\Z)$ and $\phi \in H^1(N;\Z)$ with $e\cup \phi=0$ and $\tnphi>0$.
%If $\deg(\Delta_{N,\phi})-2 < \tnphi$ then there exists  $C\in \N$ such that
%\[ \frac{1}{N}\left( \deg(\Delta_{N,C\phi}^\a)-2C\right) \leq \tnphi \]
%for all epimorphisms $\a:\pi_1(N)\to G$ to a finite group $G$.
%\end{lemma}

%==========================================================

%=========================================================


\begin{thebibliography}{10}
\bibitem[Ba01]{Ba01} S. Baldridge, {\em  Seiberg-Witten invariants of 4-manifolds with free circle actions},
Commun. Contemp. Math.  3 (2001), no. 3, 341--353.
\bibitem[Ba03]{Ba03} S. Baldridge, {\em Seiberg-Witten invariants, orbifolds, and circle actions},  Trans. Amer. Math. Soc.  355 (2003), no. 4, 1669--1697.
\bibitem[Bou88]{Bou88}
A. Bouyakoub, {\em  Sur les fibr\'es principaux de dimension $4$, en tores, munis de structures symplectiques
invariantes et leurs structures complexes}, C. R. Acad. Sci. Paris S\'er. I Math. 306 (1988),  no. 9, 417--420.
\bibitem[Bow07]{Bow07}
J. Bowden, {\em The topology of symplectic circle bundles}, Preprint (2007), to be published by the Trans. Amer. Math. Soc.
\bibitem[Ch03]{Ch03} J. Cha, {\em Fibred knots and twisted Alexander invariants}, Trans. Amer. Math. Soc.
Transactions of the AMS 355 (2003), no. 10, 4187--4200.
\bibitem[FGM91]{FGM91}
M. Fern\'andez, A. Gray, J. W. Morgan, {\em Compact symplectic manifolds with free circle actions, and Massey
products},   Michigan Math. J.  38  (1991),  no. 2, 271--283.
\bibitem[Fr07]{Fr07}
S. Friedl, {\em KnotTwister}, \texttt{http://www.labmath.uqam.ca/\~\,friedl/index.html} (2007).
\bibitem[FK06]{FK06} S. Friedl, T. Kim,
{\em The Thurston norm, fibered manifolds and twisted Alexander polynomials},  Topology  45  (2006),  no. 6, 929--953.
\bibitem[FV08a]{FV08a} S. Friedl, S. Vidussi, {\em Twisted Alexander polynomials and
symplectic structures}, Amer. J. Math. 130 (2008), no. 2, 455--484.
\bibitem[FV08b]{FV08}
S. Friedl, S. Vidussi, {\em Symplectic $\bf S^{1} \times N^3$,
surface subgroup separability, and totally degenerate Thurston
norm}, J. Amer. Math. Soc. 21 (2008), no. 2, 597--610.
\bibitem[FV07]{FV07}
S. Friedl, S. Vidussi, {\em Symplectic $4$--manifolds with a free circle action}, Preprint (2007)
\bibitem[Ga83]{Ga83} D. Gabai, {\em Foliations and the
topology of 3--manifolds}, J. Differential Geom.  18  (1983),  no. 3, 445--503.
\bibitem[Ga84]{Ga84} D. Gabai, {\em  Foliations and genera of links}, Topology 23 (1984), no. 4, 381--394
\bibitem[GKM05]{GKM05} H. Goda, T. Kitano, T. Morifuji, {\em Reidemeister Torsion, Twisted Alexander Polynomial and Fibred Knots},  Comment. Math. Helv.  80  (2005),  no. 1, 51--61.
\bibitem[KL99]{KL99} P. Kirk, C. Livingston, {\em Twisted Alexander invariants, Reidemeister
torsion and Casson--Gordon invariants}, Topology 38, no. 3: 635--661 (1999)
\bibitem[Ki08]{Ki08}
T. Kitayama, {\em Normalization of twisted Alexander invariants}, Preprint (2008)
\bibitem[Kr98]{Kr98} P. Kronheimer, {\em Embedded surfaces and gauge theory in three
and four dimensions},  Surveys in differential geometry, Vol. III (Cambridge, MA, 1996), 243--298, Int. Press,
Boston, MA (1998).
\bibitem[Kr99]{Kr99} P. Kronheimer, {\em Minimal genus in $S\sp 1\times
M\sp 3$},   Invent. Math.  135  (1999),  no. 1, 45--61.
\bibitem[Li01]{Li01} X. S. Lin, {\em Representations of knot groups and twisted
Alexander polynomials},
Acta Math. Sin. (Engl. Ser.) 17 (2001), no. 3, 361--380.
\bibitem[McC01]{McC01} J. McCarthy, {\em On the asphericity of a symplectic $M^{3} \times S^{1}$},  Proc. Amer. Math. Soc.  129  (2001),  no. 1, 257--264.
\bibitem[McM02]{McM02} C. T. McMullen, {\em The Alexander polynomial of a 3--manifold and the Thurston
norm on cohomology},  Ann. Sci. École Norm. Sup. (4)  35  (2002),  no. 2, 153--171.
\bibitem[MT96]{MT96} G. Meng, C. H. Taubes, {\em SW = Milnor torsion},  Math. Res. Lett.  3  (1996),  no. 5, 661--674.
\bibitem[Ro96]{Ro96} D. Robinson, {\em A Course in the Theory of Groups}, Graduate
Texts in Mathematics 80, Springer Verlag (1996)
\bibitem[Ta94]{Ta94} C. H. Taubes, {\em The Seiberg-Witten invariants and symplectic forms},
 Math. Res. Lett.  1  (1994),  no. 6, 809--822.
\bibitem[Ta95]{Ta95} C. H. Taubes, {\em More constraints on symplectic forms from Seiberg-Witten invariants},
 Math. Res. Lett.  2  (1995),  no. 1, 9--13.
\bibitem[Th76]{Th76} W. P. Thurston,
{\em Some simple examples of symplectic manifolds}, Proc. Amer. Math. Soc. 55 (1976), no. 2, 467--468.
\bibitem[Th86]{Th86} W. P. Thurston, {\em A norm for the homology of 3--manifolds}, Mem.
Amer. Math. Soc. 339: 99--130 (1986).
\bibitem[Tu02]{Tu02}
V. Turaev, {\em A homological estimate for the Thurston norm},
preprint (2002), arXiv:math.~GT/0207267
\bibitem[Vi03]{Vi03} S. Vidussi,
{\em Norms on the cohomology of a 3-manifold and SW theory},  Pacific J. Math.  208  (2003),  no. 1, 169--186.
\bibitem[Wa94]{Wa94}
M. Wada, {\em Twisted Alexander polynomial for finitely presentable groups}, Topology 33, no. 2:
241--256 (1994)
\end{thebibliography}
\end{document}